\documentclass[11pt,reqno]{amsart}
\usepackage[T1]{fontenc}
\usepackage{amsmath,amssymb}
\usepackage[a4paper,margin=30mm]{geometry}
\usepackage[hidelinks]{hyperref}

\newtheorem{theorem}{Theorem}[section]
\newtheorem{lemma}[theorem]{Lemma}
\newtheorem{proposition}[theorem]{Proposition}
\newtheorem{corollary}[theorem]{Corollary}
\theoremstyle{definition}
\newtheorem{definition}[theorem]{Definition}

\theoremstyle{remark}
\newtheorem{remark}[theorem]{Remark}

\newcommand{\Rn}{\mathbf R_n}
\newcommand{\Slp}{\mathbf S_{\ell}^{+}}
\newcommand{\W}{\mathbf W}
\newcommand{\V}{\mathbf V}
\newcommand{\A}{\mathbf A}
\newcommand{\B}{\mathbf B}

\newcommand{\F}{\mathbb F}
\newcommand{\Hp}{\mathcal H^{+}}
\newcommand{\eps}{\varepsilon}

\DeclareMathOperator{\Var}{Var}
\DeclareMathOperator{\Con}{Con}
\DeclareMathOperator{\Id}{Id}
\DeclareMathOperator{\Pol}{Pol}
\DeclareMathOperator{\SI}{SI}
\DeclareMathOperator{\HS}{HS}
\DeclareMathOperator{\HSP}{HSP}

\newcommand{\down}{\mathord{\downarrow}}
\newcommand{\up}{\mathord{\uparrow}}
\newcommand{\eqid}{\approx}

\numberwithin{equation}{section}
\allowdisplaybreaks[1]
\title[Mal'cev products of rings and idempotent semirings]
{Mal'cev products of rings satisfying\\
$x^n\approx x$
and idempotent semirings}
\author{Aifa Wang}
\address{School of Mathematical Sciences, Chongqing University of Technology, Chongqing 400054, P.R. China}
\email{wangaf@cqut.edu.cn}
\thanks{This work was supported by the Science and Technology Research Program of Chongqing Municipal
Education Commission (KJZD-K2024011102) and the Chongqing Natural Science Foundation Innovation
and Development Joint Fund (Municipal Education Commission) (CSTB2025NSCQ-LZX0067).}
\author{Lili Wang}
\address{School of Mathematical Sciences, Chongqing University of Technology, Chongqing 400054, P.R. China}
\email{wllaf@cqut.edu.cn}
\date{}
\subjclass[2020]{16Y60, 08B15, 20M07}
\keywords{Semiring, Mal'cev product, subvariety lattice,
subdirectly irreducible semiring, finite identity basis}

\begin{document}
\begin{abstract}
Let $\Rn$ be the variety of rings satisfying $x^n\eqid x$, where
$n\geq2$, and let $\W$ be a variety of idempotent semirings with
commutative addition. The equality $\Rn\circ\W=\Rn\vee\W$ holds,
and this variety has subvariety lattice
$L(\Rn)\times L(\W)$. Its subdirectly irreducible members that
are not additively idempotent have exactly one nontrivial ring
component, which is a finite field. Such a member is determined
by this field and an idempotent semiring with a multiplicative
identity; a separation condition on unary polynomial functions
of the latter characterizes
subdirect irreducibility. Every subvariety has a finite identity
basis. A four-element semiring generates a variety with
subdirectly irreducible members of unbounded cardinality.
The results extend the Mal'cev product theorem of Wang and Shao
from the absorption subvariety to arbitrary $\W\leq\Slp$.
\end{abstract}
\maketitle

\section{Introduction and preliminaries}

By a \emph{semiring} we mean an algebra $(S,+,\cdot)$ such that
$(S,+)$ is a commutative semigroup, $(S,\cdot)$ is a semigroup, and
the distributive identities
\[
 x(y+z)\eqid xy+xz,\qquad (y+z)x\eqid yx+zx
\]
hold in $S$. Neither a zero nor an identity element is included
in the similarity type. Rings and fields are regarded as
semirings of this type, and homomorphisms are not required to
preserve multiplicative identity elements. A semiring is
\emph{idempotent} if it satisfies
\[
 x+x\eqid x,\qquad x^2\eqid x.
\]
We denote the variety of all such semirings by $\Slp$.
Throughout the paper, $n\geq2$ is fixed, $\Rn$ denotes the
variety of rings satisfying $x^n\eqid x$, and
\begin{equation}\label{eq:m}
 m=2^n-2.
\end{equation}
For a positive integer $k$, the term $kx$ denotes the sum of
$k$ copies of $x$.

For classes $\A$ and $\B$ of semirings, their \emph{Mal'cev product}
$\A\circ\B$ consists of all semirings $S$ admitting a congruence
$\rho$ such that $S/\rho\in\B$ and each $\rho$-class which is a
subsemiring belongs to $\A$; this construction goes back to
Mal'cev~\cite{Malcev}. If $\B\leq\Slp$, every $\rho$-class
is a subsemiring. The join of two varieties is denoted by
$\A\vee\B$, and $L(\A)$ denotes the lattice of subvarieties of $\A$.
We write $\Var(\mathcal K)=\HSP(\mathcal K)$ and
$\Var(S)=\Var(\{S\})$.

Wang and Shao \cite{WS} studied $\Rn\circ\mathbf N^+$, where
\[
 \mathbf N^+
 =\{I\in\Slp:I\models x+xyx\eqid x\}.
\]
They proved that every member of this Mal'cev product is a
subdirect product of a ring in $\Rn$ and an idempotent semiring
in $\mathbf N^+$. Their results give
$\Rn\circ\mathbf N^+=\Rn\vee\mathbf N^+$ and a finite basis
for every subvariety. Without the absorption identity, this decomposition can fail:
the four-element semiring in Remark~\ref{rem:no-two-factor}
is a counterexample. We study what remains true
when $\mathbf N^+$ is replaced by an arbitrary subvariety
of $\Slp$.

The join equality holds for every $\W\leq\Slp$.
A member of $\Rn\circ\W$ generates the join of the varieties
generated by its ring components and its idempotent skeleton.
This description yields a direct product
decomposition of the subvariety lattice. The classification of
subdirectly irreducible members that are not additively idempotent
requires no finiteness assumption and includes members that are
not rings. Every subvariety is finitely based, whereas a
four-element semiring generates a residually large variety.

The equality congruence
on $S$ is $\Delta_S$, and $\Con(S)$ is its congruence lattice.
A nontrivial algebra is \emph{subdirectly irreducible} if it has
a least nontrivial congruence, called its \emph{monolith}.
For a variety $\V$, the class of its subdirectly irreducible
members is denoted by $\SI(\V)$, with algebras identified up to
isomorphism when appropriate. We refer to \cite{BS,Howie} for universal algebra
and semigroup terminology.

Jacobson's theorem states that rings satisfying
$x^n\eqid x$ are commutative \cite{Jacobson}.

\begin{lemma}\label{lem:rings}
The class $\Rn$, in the semiring similarity type, is defined
by the additional identities
\begin{equation}\label{eq:ring-basis}
 x^n\eqid x,\qquad (m+1)x\eqid x,\qquad mx\eqid my.
\end{equation}
Its nontrivial subdirectly irreducible members are precisely
the finite fields $\F_q$ for which $q-1$ divides $n-1$.
Every member of $\Rn$ is a subdirect product of such fields.
\end{lemma}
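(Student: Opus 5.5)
We prove the three assertions in turn: first the equational description, then the identification of subdirectly irreducibles via Jacobson's theorem, and finally the subdirect decomposition, which follows from Birkhoff's theorem.

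\textbf{Step 1: the identities define $\Rn$.} In the semiring type, a ring lies in $\Rn$ exactly when it satisfies $x^n\eqid x$. Such a ring satisfies $2^n x=(2x)^n=2x$, so $mx\eqid 0$, and therefore also $(m+1)x\eqid x$ and $mx\eqid my$. Conversely, let $S$ be a semiring satisfying \eqref{eq:ring-basis}.
\begin{itemize}
\item Put $0:=mx$, which does not depend on $x$ because $mx\eqid my$.
\item $0$ is additively neutral: $x+0=x+mx=(m+1)x=x$.
\item $-x:=(m-1)x$ is an additive inverse, since $x+(m-1)x=mx=0$. Note that $m\ge2$ because $n\ge2$.
\end{itemize}
Hence $(S,+)$ is an abelian group, and with distributivity and associativity $S$ is a ring. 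It satisfies $x^n\eqid x$, so $S\in\Rn$.

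\textbf{Step 2: the subdirectly irreducible members.} Congruences of a ring coincide with its ideals, and Jacobson's theorem makes every $R\in\Rn$ commutative. The ring $R$ is reduced, since $x^2=0$ gives $x=x^n=0$. Each element $a$ gives an idempotent $e=a^{n-1}$ with $ea=a$.

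Let $R$ be subdirectly irreducible and suppose $R$ has a nonzero idempotent $e\ne1$, or has no identity at all. Then $R=eR\oplus(1-e)R$, a decomposition which can be written inside $R$ as $\{er\}\oplus\{r-er\}$ without assuming $1$ exists. If both summands are nonzero, $eR$ and $\{r-er\}$ are nonzero ideals meeting in $0$, which contradicts the existence of a monolith. So either $R$ has no nonzero idempotents, in which case every $a=a^{n-1}a$ is $0$ and $R$ is trivial, or $R$ has an identity and $0,1$ are its only idempotents.

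In the second case, for $a\ne0$ the element $a^{n-1}$ is a nonzero idempotent, so $a^{n-1}=1$ and $a$ is invertible. Thus $R$ is a field.

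Every element of the field is a root of $x^n-x$, so the field has at most $n$ elements and is $\F_q$. The multiplicative group $\F_q^\times$ is cyclic of order $q-1$, and $x^{n-1}=1$ holds on it exactly when $q-1\mid n-1$. Conversely, every field is simple and hence subdirectly irreducible, and the fields $\F_q$ with $q-1\mid n-1$ satisfy $x^n\eqid x$.

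\textbf{Step 3: subdirect decomposition.} By Birkhoff's theorem, every member of the variety $\Rn$ is a subdirect product of subdirectly irreducible members. The trivial factors can be dropped, or, if all factors are trivial, $R$ is trivial and is the empty subdirect product.

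The main obstacle is the idempotent-splitting step in the absence of an assumed identity. It is handled by working with the ideals $eR$ and $\{r-er:r\in R\}$, whose intersection is $0$ because $e(r-er)=0$ and $e$ acts as the identity on $eR$.
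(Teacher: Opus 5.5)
Your proof is correct and follows the paper's argument: the same derivation of $mx\eqid 0$ and of the additive inverse $(m-1)x$, the same use of the idempotent $a^{n-1}$ to force a multiplicative identity, and the same appeal to the subdirect representation theorem. Your splitting of $R$ into the ideals $eR$ and $\{r-er:r\in R\}$ repackages the paper's step that the idempotent endomorphism $x\mapsto hx$ is injective, since its kernel is exactly $\{r-hr\}$. The paper chooses $a$ inside the monolith ideal, so it also gets $J=R$ and simplicity, whereas you go straight to $a^{n-1}=1$ for every nonzero $a$.
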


\begin{proof}
In a ring satisfying $x^n\eqid x$, the equality
$2a=(2a)^n=2^na$ gives $ma=0$, verifying
\eqref{eq:ring-basis}. Conversely, the last two identities
in \eqref{eq:ring-basis} give a common additive zero $mx$ and
an additive inverse $(m-1)x$ for $x$. They make the additive
reduct a group; the first identity gives membership in $\Rn$.

Let $R$ be a nontrivial subdirectly irreducible member of $\Rn$,
and let $J$ be the nonzero ideal corresponding to its monolith.
Choose $0\ne a\in J$ and put $h=a^{n-1}$. Then $h^2=h$,
$ha=a$, and $h\in J$. Since $R$ is commutative, $x\mapsto hx$
is an idempotent ring endomorphism. Its kernel does not contain
$a$, and hence is zero. An injective idempotent map is the
identity, so $h$ is the identity element of $R$.
Containing the identity, the ideal satisfies $J=R$.
This makes $R$ simple; being commutative
with identity, it is a field.

Every element of this field is a root of $X^n-X$.
The bound on the number of roots makes $R$ finite, say
$R\cong\F_q$. Its multiplicative group
is cyclic~\cite[Chapter~2]{LN}, so $q-1\mid n-1$.
Conversely, each such field belongs
to $\Rn$ and is simple. The last assertion follows from the
subdirect representation theorem~\cite[Chapter~II]{BS}.
\end{proof}

For a semiring $S$, put
\[
 E^+(S)=\{a\in S:a+a=a\},
\]
and let $\Hp$ denote Green's $\mathcal H$-relation on its
additive reduct. If $(S,+)$ is a commutative Clifford
semigroup, its $\Hp$-classes are its maximal subgroups;
see \cite{Howie}.

The variety of bands is locally finite~\cite{Howie}. Since every semiring
term is a sum of nonempty multiplicative words, $\Slp$ is
locally finite as well. The multiplicative reduct of every
member of $\Slp$ is a regular band \cite{PZ}, so the identity
\begin{equation}\label{eq:regular-band}
 xyxzx\eqid xyzx
\end{equation}
holds in $\Slp$. We do not assume that multiplication in
$\Slp$ is commutative.

\section{The Mal'cev product and the join}

The ring components of a member of $\Rn\circ\W$ are the
fibers of an additive retraction.

\begin{lemma}\label{lem:skeleton}
Let $\W\leq\Slp$. A semiring $S$ belongs to $\Rn\circ\W$
if and only if it satisfies
\begin{equation}\label{eq:malcev-basis}
 x^n\eqid x,\qquad (m+1)x\eqid x,\qquad
 mu\eqid mv\quad(u\eqid v\in\Id(\W)).
\end{equation}
For every such $S$, the map
\[
 \eps:S\longrightarrow E^+(S),\qquad \eps(a)=ma
\]
is a retraction with kernel $\Hp$. For its image one has
$I=E^+(S)\in\W$, and each set
\[
 R_i=\eps^{-1}(i)\qquad(i\in I)
\]
is a ring in $\Rn$ with zero element $i$.
\end{lemma}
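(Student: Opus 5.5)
We prove both directions, starting with necessity. Let $S\in\Rn\circ\W$ via a congruence $\rho$, so that $S/\rho\in\W$ and each $\rho$-class is a ring in $\Rn$. Here $x^n\eqid x$ holds because every element lies in its own class, and that class satisfies the identity. The identity $(m+1)x\eqid x$ holds for the same reason, by Lemma~\ref{lem:rings}. For $u\eqid v\in\Id(\W)$ and any assignment, the values $u$ and $v$ are $\rho$-related, since $S/\rho\models u\eqid v$. So they lie in one ring class $R$. By Lemma~\ref{lem:rings}, $mu=mv$ is then the zero of $R$.

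For sufficiency, assume \eqref{eq:malcev-basis}. Since $x+x\eqid x$ and $x^2\eqid x$ hold in $\W$, we get
\[
m(a+a)=ma\quad\text{and}\quad m(a^2)=ma .
\]
\emph{Additive structure.} From $(m+1)a=a$, the additive semigroup generated by $a$ is a finite cyclic group of order dividing $m$ with identity $ma$. Hence $(S,+)$ is a union of groups, so it is a commutative Clifford semigroup. Put $\eps(a)=ma$; then $\eps(a)+\eps(a)=2ma=ma$, using $m\ge 2$ and $m(2a)=ma$. Also $\eps$ is additive, since $m(a+b)=ma+mb$ by commutativity. The idempotents are exactly the elements with $e=me$, so $\eps$ is a retraction onto $E^+(S)$. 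In a commutative Clifford semigroup, $a\,\Hp\, b$ if and only if $a$ and $b$ have the same group identity. Thus $\ker\eps=\Hp$.

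\emph{Multiplicativity.} We need $\eps(ab)=\eps(a)\eps(b)$. Expanding by distributivity, $(ma)(mb)=m^2(ab)$, and since $m\cdot m(ab)=m(ab)$ (iterate $(m+1)x\eqid x$ and idempotence of $m(ab)$), this equals $m(ab)$. So $\eps$ is a semiring homomorphism and $\ker\eps$ is a congruence. The image $I=E^+(S)$ then satisfies every identity of $\W$: for $e_i\in I$ we have $u(\bar e)=m\,u(\bar e)=m\,v(\bar e)=v(\bar e)$, because $\eps$ is a homomorphism fixing $I$. Hence $I\in\W$, and $S/\ker\eps\cong I$.

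\emph{Fibers.} Each fiber $R_i=\eps^{-1}(i)$ is a subsemiring and an additive group with zero $i$. It satisfies $x^n\eqid x$ and $(m+1)x\eqid x$. It also satisfies $mx\eqid my$, since $mx=i=my$ on $R_i$. By Lemma~\ref{lem:rings}, $R_i\in\Rn$. This exhibits $S\in\Rn\circ\W$ via $\rho=\ker\eps$.

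The main obstacle I expect is the multiplicativity step. It needs the identity $m(ab)\cdot$-compatibility, namely that $m^2x=mx$, which follows since $mx$ is additively idempotent, together with the care that $m\cdot(ma)=ma$. Beyond that, the only delicate point is that the identities of $\W$ used are only those inherited through $\eps$, which works because $\eps$ fixes $I$ pointwise.
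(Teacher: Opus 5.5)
Your proof is correct and follows essentially the paper's route: $(m+1)x\approx x$ makes $(S,+)$ a commutative Clifford semigroup, $\eps$ is a homomorphism via $m^2x=mx$, the identities of $\W$ transfer to $E^+(S)$, and the fibers are rings by Lemma~\ref{lem:rings}. The one difference is in necessity: you obtain $mu\approx mv$ directly from $u(\bar a)\mathrel\rho v(\bar a)$ and the identity $mx\approx my$ in the ring class, which is slightly more economical than the paper's detour of first proving $\rho=\Hp$ and then transferring identities through $\eps$ onto $E^+(S)\cong S/\Hp\in\W$. A minor remark: closure of each fiber $R_i$ under multiplication uses $i^2=i$, which you have from $m(a^2)=ma$, though you did not say so.
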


\begin{proof}
Let $\rho$ witness $S\in\Rn\circ\W$. Each $\rho$-class is a
ring, so $(S,+)$ is a semilattice of abelian groups and hence
a commutative Clifford semigroup. Since the quotient is an
additive semilattice, $\Hp\subseteq\rho$. Conversely, every
$\rho$-class is a subgroup of $(S,+)$ and is contained in a
maximal subgroup, so $\rho\subseteq\Hp$ and $\rho=\Hp$.

By Lemma~\ref{lem:rings}, the first two identities of
\eqref{eq:malcev-basis} hold, and $ma$ is the zero of the ring
component containing $a$, giving
$E^+(S)=\{ma:a\in S\}$. Distributivity and
$(m+1)x\eqid x$ give
\[
 m(a+b)=ma+mb,\qquad (ma)(mb)=m^2ab=mab.
\]
These identities make $\eps$ a homomorphism. It fixes every
additive idempotent and has kernel $\Hp$. The isomorphism
$E^+(S)\cong S/\Hp\in\W$ establishes the remaining identities
in \eqref{eq:malcev-basis}.

Conversely, suppose that \eqref{eq:malcev-basis} holds.
The identity $(m+1)x\eqid x$ makes each cyclic additive
subsemigroup a group with identity $mx$, making $(S,+)$
a commutative Clifford semigroup. The preceding calculation
again shows that $\eps(a)=ma$ is a retraction onto $E^+(S)$.
For every semiring term $u$,
\[
 u(ma_1,\ldots,ma_k)=m\,u(a_1,\ldots,a_k).
\]
The displayed equality shows that $E^+(S)$ satisfies all
identities of $\W$. Its multiplication is idempotent, so
each $\eps$-fiber is closed under multiplication. Such a fiber
has an additive group and satisfies $x^n\eqid x$, making it
a ring in $\Rn$. The congruence $\ker\eps$ therefore witnesses
$S\in\Rn\circ\W$.
\end{proof}

We refer to $I=E^+(S)$ as the \emph{idempotent skeleton} of $S$.
For $a\in S$, we call $\eps(a)$ its \emph{support}.
The additive semilattice order on $I$ will always be
\begin{equation}\label{eq:order}
 i\leq j\quad\Longleftrightarrow\quad i+j=j.
\end{equation}
Multiplication preserves this order in both arguments.
For $e\in I$, write
$\down e=\{i:i\leq e\}$ and $\up e=\{i:e\leq i\}$.

\begin{lemma}\label{lem:connecting}
Let $S\in\Rn\circ\W$ and use the notation of
Lemma~\ref{lem:skeleton}. For every $e\in I$, the set
$S_{\leq e}=\bigcup_{i\leq e}R_i$ is a subsemiring, and
\[
 \alpha_e:S_{\leq e}\longrightarrow R_e,\qquad
 \alpha_e(a)=a+e
\]
is a homomorphism. For $i\leq j$,
\[
 \phi_{i,j}:R_i\longrightarrow R_j,\qquad a\longmapsto a+j
\]
is a ring homomorphism, and
$\phi_{j,k}\phi_{i,j}=\phi_{i,k}$ whenever $i\leq j\leq k$.
\end{lemma}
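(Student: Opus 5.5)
The proof needs only the retraction $\eps$ of Lemma~\ref{lem:skeleton}, the order \eqref{eq:order}, and the fact that $I$ is an idempotent semiring on which multiplication is monotone. The key observation is that for $a\in R_i$ and $j\in I$ we have
\[
\eps(a+j)=\eps(a)+\eps(j)=i+j, \qquad \eps(aj)=ij, \qquad \eps(ja)=ji,
\]
since $\eps$ is a homomorphism fixing $I$.

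First we check that $S_{\leq e}$ is a subsemiring. Take $a\in R_i$ and $b\in R_j$ with $i,j\leq e$. Then $\eps(a+b)=i+j\leq e$, since $e$ is an upper bound in the semilattice. Also $\eps(ab)=ij$, and monotonicity gives $ij\leq ee=e$. So $S_{\leq e}=\eps^{-1}(\down e)$ is closed under both operations.

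Next, $\alpha_e$ lands in $R_e$: for $a\in R_i$ with $i\leq e$ we get $\eps(a+e)=i+e=e$. Additivity follows from commutativity, associativity and $e+e=e$:
\[
(a+b)+e=(a+e)+(b+e).
\]
Multiplicativity is the one real computation. We need $(a+e)(b+e)=ab+e$, and expanding gives
\[
(a+e)(b+e)=ab+ae+eb+e.
\]
So it suffices to show $ae+eb+e=e$. I would argue inside the ring $R_e$ with zero $e$. Since $\eps(ae)=ie$ and $\eps(eb)=ej$, we first need $ie+e=e$ and $ej+e=e$. This follows from $i\leq e$: multiplying on the right by $e$ gives $ie\leq ee=e$, and similarly $ej\leq e$. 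Hence $ae+e$ lies in $R_e$ and so does $eb+e$. It remains to see that these elements are the zero $e$.

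Here I use $x^n\eqid x$ together with distributivity. Put $c=ae+e$. Then
\[
c\cdot e=ae\cdot e+ee=ae+e=c,
\]
because $e^2=e$. But $e$ is the zero of $R_e$ and $c\in R_e$, so $ce=e$. Therefore $c=e$. Symmetrically, $e(eb+e)=eb+e$ equals $e$. Hence $ae+eb+e=(ae+e)+(eb+e)-\text{free}=e+e=e$, and $(a+e)(b+e)=ab+e$ as required. I expect this zero-absorption step to be the main obstacle. Its resolution is the observation that right multiplication by the idempotent $e$ fixes $ae+e$ while also annihilating it in $R_e$.

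For $\phi_{i,j}$, restrict $\alpha_j$ to $R_i\subseteq S_{\leq j}$. This gives a homomorphism of semirings between rings, which automatically preserves negatives and zero, so $\phi_{i,j}$ is a ring homomorphism. Finally, for $i\leq j\leq k$ and $a\in R_i$,
\[
\phi_{j,k}\phi_{i,j}(a)=(a+j)+k=a+(j+k)=a+k=\phi_{i,k}(a),
\]
using $j+k=k$.
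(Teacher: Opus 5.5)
Your proof is correct. Its overall structure matches the paper's: closure of $S_{\leq e}$ via supports, expanding $(a+e)(b+e)$, absorbing the cross terms, and restricting $\alpha_j$ to obtain $\phi_{i,j}$. The difference is in how you dispose of the cross terms $ae$ and $eb$.

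The paper observes that $ae+ae=a(e+e)=ae$, so $ae$ is an additive idempotent. Since $\eps$ is a retraction onto $E^+(S)=I$, this gives $ae=\eps(ae)=ie$, and likewise $eb=ej$. Then $ie+ej+e=e$ follows at once from $ie,ej\leq e$.

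You instead place $c=ae+e$ in $R_e$ and compare $ce=c$ (from $e^2=e$) with $ce=e$ (the zero of the ring $R_e$ absorbs products). Both arguments are valid. The paper's observation settles what you call the main obstacle in one line, and it yields the stronger pointwise fact $ae=ie\in I$: multiplying any element by a skeleton element lands in the skeleton. The paper reuses this fact later, for instance in Lemma~\ref{lem:reconstruction} to obtain $ri=i$, and in the proof of Lemma~\ref{lem:separation}, where $dfd$ is recognized as additively idempotent. Your argument gives only the absorbed form $ae+e=e$.

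Two small points of exposition. First, the step does not use $x^n\eqid x$; it uses only that $e$ is the zero of the ring $R_e$ and that $e^2=e$. Second, the garbled suffix ``$-\mathrm{free}$'' should be replaced by the explicit duplication $ae+eb+e=ae+eb+e+e=(ae+e)+(eb+e)$, using $e+e=e$.
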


\begin{proof}
For $i,j\leq e$, the inequalities $i+j\leq e$ and
$ij\leq e^2=e$ show that $S_{\leq e}$ is closed under
both operations.
For $a\in R_i$, the element $ae$ is additively idempotent,
since $e+e=e$. Its support is $ie$, so $ae=ie$.
The right-handed calculation gives $eb=ej$ for $b\in R_j$.
These equalities yield
\[
 (a+e)(b+e)=ab+ie+ej+e=ab+e.
\]
Additive compatibility and the composition formula follow from \eqref{eq:order}.
\end{proof}

Homomorphisms from ring components into an algebraically closed
field extend along connecting homomorphisms under the following
kernel condition; the components need not have multiplicative
identities.

\begin{lemma}\label{lem:extension}
Let $A,B\in\Rn$, and consider a ring homomorphism
$\alpha:A\to B$. Suppose that $\chi:A\to K$ is a nonzero
homomorphism into an algebraically closed field. If
$\ker\alpha\subseteq\ker\chi$, then there is a homomorphism
$\psi:B\to K$ such that $\psi\alpha=\chi$.
\end{lemma}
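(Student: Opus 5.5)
Since $\ker\alpha\subseteq\ker\chi$, the map $\chi$ factors through the image: there is a unique additive and multiplicative map $\chi':\alpha(A)\to K$ with $\chi'\alpha=\chi$. It is well defined because $\alpha(a)=\alpha(a')$ gives $a-a'\in\ker\alpha\subseteq\ker\chi$. So the task reduces to extending a nonzero homomorphism $\chi'$ from the subring $A'=\alpha(A)$ of $B$ to all of $B$. My plan is to do this by idempotent decomposition, using that every element of a ring in $\Rn$ satisfies $x^n=x$, and that $B$ is commutative by Jacobson's theorem.

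First I will locate an idempotent that detects $\chi'$. Since $\chi'$ is nonzero, choose $a\in A'$ with $\chi'(a)\neq0$ and put $h=a^{n-1}$, as in the proof of Lemma~\ref{lem:rings}. Then $h\in A'$, $h^2=h$ and $ha=a$. Hence $\chi'(h)\chi'(a)=\chi'(a)$, so $\chi'(h)=1$.

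Next I decompose $B$ along $h$. Since $B$ is commutative, $hB$ is an ideal of $B$ and a ring with identity $h$. The map $\pi:B\to hB$, $b\mapsto hb$, is a surjective ring homomorphism. For $x\in A'$ we have $\chi'(hx)=\chi'(x)$. So it suffices to find a homomorphism $\psi_0:hB\to K$ that extends $\chi'$ restricted to $hA'\subseteq hB$, and then set $\psi=\psi_0\pi$. On $A'$ this gives $\psi(x)=\psi_0(hx)=\chi'(hx)=\chi'(x)$, and therefore $\psi\alpha=\chi$.

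Now $hB$ and $hA'$ are commutative unital rings with the same identity $h$, and both satisfy $x^n=x$. They are therefore reduced and von Neumann regular, with all elements integral over the prime field; each is a subdirect product of finite fields by Lemma~\ref{lem:rings}. The kernel $P=\ker(\chi'|_{hA'})$ is a prime ideal of $hA'$, since its image lies in the field $K$, and it is maximal because $hA'/P$ embeds in $K$ as a finite ring without zero divisors. The extension $hA'\subseteq hB$ is integral, since every element is a root of $X^n-X$. By lying over, there is a prime $Q$ of $hB$ with $Q\cap hA'=P$. Then $hB/Q$ is an integral domain satisfying $x^n=x$, hence a finite field, and it is algebraic over the subfield $hA'/P$. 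An embedding $hA'/P\hookrightarrow K$ extends to $hB/Q\hookrightarrow K$ because $K$ is algebraically closed. Composing gives $\psi_0$.

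The main obstacle is this lying-over and field-extension step. In particular, I must make sure that the unital structure really matches, that is, $h$ is the identity of both $hA'$ and $hB$, so that the standard commutative algebra applies. As a more elementary alternative, I could work with finitely generated subrings, which are finite because the variety is locally finite. There one can decompose into products of finite fields via orthogonal idempotents, extend field by field, and then pass to all of $B$ by a compactness or Zorn argument. I would try the lying-over route first.
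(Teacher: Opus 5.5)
Your proposal is correct and follows essentially the same route as the paper's proof. Both factor $\chi$ through $C=\alpha(A)$, take the idempotent $h=c^{n-1}$ with $\bar\chi(h)=1$, and apply lying-over to the integral extension $hC\subseteq hB$ of rings with common identity $h$ (integral because every element is a root of $X^n-X$). Both then embed the finite residue field $hB/\mathfrak q$ into the algebraically closed $K$ and precompose with $b\mapsto hb$.
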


\begin{proof}
The map $\chi$ induces a homomorphism $\bar\chi:C\to K$,
where $C=\alpha(A)\leq B$. Choose $c\in C$ with
$\bar\chi(c)\ne0$ and put $h=c^{n-1}$. Then $h^2=h$ and
$\bar\chi(h)=1$. The inclusion $hC\subseteq hB$ is an
inclusion of commutative rings with common identity $h$.
It is integral, since every element of $hB$ satisfies the
monic polynomial $X^n-X$ over $hC$.

The image of $hC$ in $K$ is a finite field by
Lemma~\ref{lem:rings}; let $\mathfrak p$ be the kernel of
this map. By the lying-over theorem for integral extensions
\cite[Chapter~5]{AM}, there is a prime ideal
$\mathfrak q$ of $hB$ with
$\mathfrak q\cap hC=\mathfrak p$. The domain
$hB/\mathfrak q$ satisfies $x^n\eqid x$, so it is a finite
field. Its embedding of $hC/\mathfrak p$ extends to an
embedding into $K$, because $K$ is algebraically closed.
This gives a homomorphism $\eta:hB\to K$ extending
$\bar\chi|_{hC}$. Set $\psi(b)=\eta(hb)$. For $c\in C$,
\[
 \psi(c)=\bar\chi(hc)=\bar\chi(c).
\]
This restriction gives $\psi\alpha=\chi$.
\end{proof}

\begin{lemma}\label{lem:separation}
Let $S\in\Rn\circ\W$ have finite skeleton $I$. For distinct
$a,b$ in the same ring component, there is a field $F$,
a homomorphic image of a ring component of $S$, for which
some algebra $Q\in\HS(F\times I)$ admits a homomorphism
$\eta:S\to Q$ satisfying $\eta(a)\ne\eta(b)$.
\end{lemma}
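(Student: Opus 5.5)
The plan is to turn the pair $a\ne b$ into a single character, and then to propagate that character over the skeleton so that it becomes a homomorphism into a subalgebra of (a quotient of) $F\times I$. I have not checked the multiplicative coherence in the second step; it is the point I expect to be the main obstacle.

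\emph{Reduction.} Let $e=\eps(a)=\eps(b)$. For $j\geq e$, if $a+j\ne b+j$, then any homomorphism separating $a+j$ and $b+j$ also separates $a$ and $b$. Since $I$ is finite, we may move up along the connecting homomorphisms $\phi_{e,j}$ of Lemma~\ref{lem:connecting}. So we replace $e$ by a maximal $j\geq e$ with $\phi_{e,j}(a)\ne\phi_{e,j}(b)$, and assume:
\[
 a-b\ne0 \text{ in } R_e, \qquad a-b\in\ker\phi_{e,k}\ \text{ for all } k>e.
\]
By Lemma~\ref{lem:rings}, $R_e$ is a subdirect product of finite fields. Hence there is a surjection $\chi_e:R_e\to F$ onto a finite field $F$ with $\chi_e(a)\ne\chi_e(b)$. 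Fix an algebraic closure $K\supseteq F$.

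\emph{Propagation.} I would build a compatible family of homomorphisms $\chi_i:R_i\to K$, for $i$ ranging over a suitable set $U\subseteq I$, with $\chi_i=0$ off $U$, satisfying:
\[
 \chi_j\phi_{i,j}=\chi_i \quad (i\leq j), \qquad
 \chi_{ij}(xy)=\chi_i(x)\chi_j(y).
\]
On $\down e$, the natural choice is $\chi_i=\chi_e\circ\alpha_e|_{R_i}$, which is a homomorphism by Lemma~\ref{lem:connecting}. Upward, I would use Lemma~\ref{lem:extension} with $\alpha=\phi_{e,j}$. Its kernel hypothesis $\ker\phi_{e,j}\subseteq\ker\chi_e$ is exactly what the maximal choice of $e$ is meant to control. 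If it fails for some $j>e$, I would declare $j\notin U$ and send all of $R_j$ to a single absorbing point.

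Then define
\[
 \eta(x)=\bigl(\chi_{\eps(x)}(x),\,\eps(x)\bigr).
\]
Its values lie in $F\times I$: each $\chi_i$ has image a finite subfield of $K$ satisfying $x^n\eqid x$, so after enlarging, $F$ may be taken to be the image of a single component. Let $Q$ be the image of $S$ under $\eta$, followed by the quotient of that subalgebra which collapses the set $F\times(I\setminus U)$ to a point.

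\emph{Verification.} Additivity of $\eta$ comes from $x+y=\phi_{i,i+j}(x)+\phi_{j,i+j}(y)$ together with the coherence $\chi_j\phi_{i,j}=\chi_i$. Multiplicativity needs the displayed product rule. For this I would use $ae=ie$ from the proof of Lemma~\ref{lem:connecting} and the regular band identity \eqref{eq:regular-band} to reduce a product of two components to products inside $S_{\leq e}$ or inside a single upper component.

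\emph{Main obstacle.} The hardest step is making $U$ and the collapsed set behave correctly. The collapse is a congruence on the image only if $I\setminus U$ is closed under $+$ and absorbs products. I would first try taking
\[
 U=\{\,i\in I: ie\text{ and } ei \text{ relate to } e \text{ so that } \chi_e\circ\alpha_e \text{ is defined on } R_{eie}\,\},
\]
and setting $\chi_i(x)=\chi_e(\alpha_e(exe))$ for $i\in U$. The point is that $x\mapsto exe$ lands in $R_{eie}\subseteq S_{\leq e}$. Regularity of the band should make $x\mapsto\chi_e(\alpha_e(exe))$ multiplicative on those supports where $eie=e$, and zero-like elsewhere. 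This yields a homomorphism into $F\times I$ modulo the ideal of supports $i$ with $eie<e$. Separation then holds because $\eta$ restricted to $R_e$ is $(\chi_e,e)$.
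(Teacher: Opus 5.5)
Your reduction is correct and stronger than you use. The step you defer as the ``main obstacle'' is the real content of the lemma, and the proposal does not supply it.

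After the reduction, $a-b\in\ker\phi_{e,f}$ while $\chi_e(a-b)\ne0$ for every $f>e$. So the hypothesis of Lemma~\ref{lem:extension} fails for every $j>e$, and nothing propagates upward. All indices outside $\down e$ should receive the zero character with their fibers collapsed, as in the paper.

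Multiplicativity then demands $\chi_e(xy+e)=0$ whenever $x\in R_i$, $y\in R_j$, $ij\leq e$, and $i\not\leq e$ or $j\not\leq e$. This is what \eqref{eq:product-separation} provides. It does not follow from $xe=ie$ and the regular band identity alone. You need some $c\in R_e$ with $\chi_e(c)\ne0$ and $c+f=f$, where $f=e+i+j>e$. The paper gets $c$ from a maximal extendable character via Lemma~\ref{lem:extension}. Your reduction gives it directly as $c=a-b$, which would bypass the lying-over argument entirely.

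The argument then runs as follows. Put $g=ij$ and suppose $\chi_e\phi_{g,e}\ne0$. One has $gig=gjg=g\leq geg$, hence $gfg=geg$. Pick $d\in R_g$ with $\chi_e(d+e)\ne0$. Multiplying $c+f=f$ by $d$ on both sides gives $dcd=geg$. Applying $\chi_e\alpha_e$ yields $\chi_e(d+e)^2\chi_e(c)=0$, a contradiction. This implication is the missing idea.

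The repair you sketch does not work. For $x\in R_i$, the product $xe$ is additively idempotent with support $ie$, so $xe=ie$ and $exe=eie\in I$.
\begin{itemize}
\item Thus $\chi_e(\alpha_e(exe))=\chi_e(e)=0$ whenever it is defined. Your $\chi_i$ vanish identically, including on $R_e$, which contradicts your closing claim that $\eta$ restricts to $(\chi_e,e)$ there.
\item Moreover $eie\leq e$ fails in general: for the chain $K$ of Theorem~\ref{thm:large}, $ete=t$. So $exe$ need not lie in $S_{\leq e}$.
\item The quotient is also wrong. Collapsing all of $F\times(I\setminus U)$ to a single absorbing point requires $I\setminus U$ to absorb products. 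Yet an index outside $\down e$ times one inside can land back in $\down e$; in $K$, $t\cdot0=0$.
\end{itemize}
The paper keeps indices apart. With $U=\{i\leq e:\chi_i\ne0\}$ and $V=\up U$, it passes to the subsemiring $T=\{(r,i)\in F\times I:i\in V\text{ or }r=0\}$. It then collapses each fiber over $I\setminus\down e$ separately. Even this fiberwise collapse is not a congruence on all of $F\times I$; on $T$, its compatibility with multiplication is exactly \eqref{eq:product-separation}.
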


\begin{proof}
Suppose that $a,b\in R_k$. By Lemma~\ref{lem:rings}, there
is a homomorphism $\chi_0:R_k\to K$ into an algebraically
closed field such that $\chi_0(a)\ne\chi_0(b)$. Since $I$
is finite, choose a maximal $e\geq k$ for which there is a
homomorphism $\chi:R_e\to K$ satisfying
$\chi\phi_{k,e}=\chi_0$. For $i\leq e$, put
\[
 \chi_i=\chi\phi_{i,e}.
\]
We claim that
\begin{equation}\label{eq:product-separation}
 ij\leq e\ \hbox{and}\ \chi_{ij}\ne0
 \quad\Longrightarrow\quad i,j\leq e.
\end{equation}
Suppose otherwise, and set $g=ij$ and $f=e+i+j$.
Then $f>e$. Maximality of $e$ implies that $\chi$ does not
extend along $\phi_{e,f}$. By Lemma~\ref{lem:extension},
there is $c\in R_e$ such that
\[
 c+f=f,\qquad \chi(c)\ne0.
\]
Choose $d\in R_g$ such that $\chi(d+e)\ne0$.

In any band, $g=ij$ implies $gig=gjg=g$.
Combining this with the implication from $g\leq e$ to
$g\leq geg\leq e$ gives
\[
 gfg=geg+gig+gjg=geg.
\]
Multiplying $c+f=f$ on both sides by $d$, we obtain
$dcd+dfd=dfd$. Both products belong to the ring $R_{geg}$,
and the additive idempotence of $dfd$ makes $dfd$ the
zero of that ring, giving $dcd=geg$. Applying the homomorphism
$\chi\alpha_e$ from Lemma~\ref{lem:connecting} gives
\[
 \chi(d+e)^2\chi(c)=0,
\]
contradicting the choices of the two nonzero factors.

Put
\[
 D=\down e,\qquad
 U=\{i\leq e:\chi_i\ne0\},\qquad
 V=\up U,\qquad F=\chi(R_e),
\]
where $\up U=\{j\in I:i\leq j\text{ for some }i\in U\}$.
The set $U$ is upward closed within $D$: for
$i\leq j\leq e$, the equality $\chi_i=\chi_j\phi_{i,j}$
expresses the homomorphism at the smaller index as a
composition through the larger one.
It is also closed under multiplication: if $i,j\in U$,
choose $u\in R_i$ and $v\in R_j$ with
$\chi_i(u)\chi_j(v)\ne0$. Then
\[
 \chi_{ij}(uv)=\chi_i(u)\chi_j(v)\ne0.
\]
These closure properties give
\begin{equation}\label{eq:support-filter}
 V\cap D=U,\qquad VV\subseteq V,\qquad V+I\subseteq V.
\end{equation}
The nonzero image $F$ is a finite field in $\Rn$.

Consider the subsemiring
\[
 T=\{(r,i)\in F\times I:i\in V\text{ or }r=0\}.
\]
Closure follows from \eqref{eq:support-filter}. Define an
equivalence relation $\theta$ on $T$ by retaining all
elements of each fiber over $D$, and collapsing each fiber
over $I\setminus D$ to one point, without identifying
different indices. Addition cannot send an index outside
$D$ into $D$, so it preserves $\theta$.
For multiplication, only an index $i\in V\setminus D$
and a factor with nonzero first coordinate require
consideration. The latter factor has index $j\in V$.
If $ij\in D$, then \eqref{eq:support-filter} gives
$ij\in U$, contradicting \eqref{eq:product-separation}.
The argument also applies to $ji$. When the other first
coordinate is zero, both product coordinates are zero,
which gives the remaining compatibility case. These cases
establish $\theta\in\Con(T)$.

Let $Q=T/\theta$ and define, for $s\in R_i$,
\begin{equation}\label{eq:eta}
 \eta(s)=
 \begin{cases}
  [(\chi_i(s),i)]_\theta,&i\leq e,\\
  [(0,i)]_\theta,&i\not\leq e.
 \end{cases}
\end{equation}
This is well defined, since $\chi_i=0$ when $i\in D\setminus U$.
For addition, the condition $i+j\in D$ forces $i,j\in D$,
where Lemma~\ref{lem:connecting} gives compatibility.
Outside this case the target fiber is collapsed.
Multiplicative compatibility for $ij\in D$ and $i,j\in D$
comes from the same lemma. When $ij\in D$ but one of
$i,j$ lies outside $D$, \eqref{eq:product-separation}
implies $\chi_{ij}=0$, so both sides of the required equality
have first coordinate zero. The case $ij\notin D$ has a
collapsed target fiber.

The fiber over $k$ remains uncollapsed and $\chi_k=\chi_0$,
so $\eta(a)\ne\eta(b)$.
\end{proof}

\begin{theorem}\label{thm:join}
For every $\W\leq\Slp$,
\begin{equation}\label{eq:join}
 \Rn\circ\W=\Rn\vee\W.
\end{equation}
For $S\in\Rn\circ\W$ with skeleton $I$ and ring
components $R_i$, one has
\begin{equation}\label{eq:individual-variety}
 \Var(S)=
 \left(\bigvee_{i\in I}\Var(R_i)\right)\vee\Var(I).
\end{equation}
\end{theorem}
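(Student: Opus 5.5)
We prove \eqref{eq:individual-variety} first and then derive \eqref{eq:join} from it. Both sides of \eqref{eq:individual-variety} are easy to compare in one direction. Each $R_i$ is a subsemiring of $S$. By Lemma~\ref{lem:skeleton}, $I=E^+(S)$ is a retract of $S$ via $\eps$, so $I\in\HS(S)$. This gives the inclusion $\supseteq$.

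For $\subseteq$, write $\V$ for the right-hand side. It suffices to show that for every pair $a\ne b$ in $S$ there is a homomorphism from $S$ into a member of $\V$ separating them; then $S$ embeds into a product of members of $\V$. If $\eps(a)\ne\eps(b)$, the retraction $\eps:S\to I$ already separates them. If $a,b$ lie in the same component $R_k$ and $I$ is finite, Lemma~\ref{lem:separation} gives $\eta:S\to Q$ with $Q\in\HS(F\times I)$. Here $F$ is a homomorphic image of some $R_e$, so $F\in\Var(R_e)$ and $Q\in\V$. This settles the case of finite $I$.

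For infinite $I$, we reduce to the finite case by local finiteness. A failing identity $u\eqid v$ of $S$ involves finitely many elements $a_1,\dots,a_k$. Take the subsemiring $S'$ they generate; it lies in $\Rn\circ\W$. Its skeleton $\eps(S')$ is the subsemiring of $I$ generated by the finitely many $\eps(a_j)$, and this is finite because $\Slp$ is locally finite. Each ring component of $S'$ is a subring of some $R_i$, since the fibers of $\eps|_{S'}$ are the intersections $S'\cap R_i$. So by the finite case, $\Var(S')$ is contained in $\bigvee_i\Var(S'\cap R_i)\vee\Var(\eps(S'))\subseteq\V$. Hence $u\eqid v$ fails in $\V$. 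This shows that every identity of $\V$ holds in $S$, so $S\in\V$.

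Finally, for \eqref{eq:join}: $\Rn\circ\W$ is a variety by Lemma~\ref{lem:skeleton}, being defined by \eqref{eq:malcev-basis}. It contains $\Rn$, because each ring is a Mal'cev product over the trivial quotient, and it contains $\W$, because each member of $\W$ is its own skeleton with trivial components. Hence $\Rn\vee\W\subseteq\Rn\circ\W$. Conversely, any $S\in\Rn\circ\W$ satisfies $R_i\in\Rn$ and $I\in\W$, so $S\in\Rn\vee\W$ by \eqref{eq:individual-variety}. The main difficulty sits in Lemma~\ref{lem:separation}, which is already available. The remaining delicate point is the reduction from infinite to finite skeleton, and there it must be checked that the components of $S'$ really embed into components of $S$.
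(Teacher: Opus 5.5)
Your proposal is correct and follows the paper's proof essentially step for step. In the finite-skeleton case you separate points by $\eps$ and by Lemma~\ref{lem:separation}; you then reduce to finitely generated subsemirings using local finiteness of $\Slp$, and derive the join equality from Lemma~\ref{lem:skeleton}. The only differences are cosmetic: you phrase the reduction contrapositively through a failing identity, and you place $I$ in $\HS(S)$ via the retraction, whereas the paper simply notes that $I$ is a subsemiring.
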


\begin{proof}
Assume that $I$ is finite. The map $\eps:S\to I$
separates elements from different ring components.
For a pair in a common component, Lemma~\ref{lem:separation}
supplies a separating homomorphism whose field is a
homomorphic image of a ring component.
Taking the product of these separating homomorphisms embeds
$S$ in a product of algebras belonging to the right-hand
side of \eqref{eq:individual-variety}. The reverse
inclusion follows because every $R_i$ and $I$ are
subsemirings of $S$.

Let $I$ be arbitrary. A finitely generated subsemiring
$S_0$ of $S$ has finitely generated skeleton
$I_0=\eps(S_0)$. Since $\Slp$ is locally finite, $I_0$ is
finite. Its ring components are subrings of the corresponding
components of $S$, and $I_0\leq I$. The finite-skeleton
argument therefore puts every such $S_0$ in the right-hand
side of \eqref{eq:individual-variety}. Since identities
can be checked on finitely generated subalgebras,
\eqref{eq:individual-variety} follows in general.

Equation~\eqref{eq:individual-variety} gives
$\Rn\circ\W\subseteq\Rn\vee\W$.
Lemma~\ref{lem:skeleton} makes $\Rn\circ\W$ a variety
containing both $\Rn$ and $\W$, which gives the reverse
inclusion in \eqref{eq:join}.
\end{proof}

\begin{corollary}\label{cor:ambient-basis}
The variety $\Rn\circ\Slp$ is defined, relative to the
semiring axioms, by
\begin{equation}\label{eq:ambient-basis}
 x^n\eqid x,\qquad (m+1)x\eqid x,\qquad mx^2\eqid mx.
\end{equation}
For $n=2$, the two identities $x^2\eqid x$ and $3x\eqid x$
suffice.
\end{corollary}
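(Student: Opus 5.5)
By Lemma~\ref{lem:skeleton} applied with $\W=\Slp$, the variety $\Rn\circ\Slp$ is defined by $x^n\eqid x$, $(m+1)x\eqid x$ and the identities $mu\eqid mv$ for $u\eqid v\in\Id(\Slp)$. So the task is to show that, modulo the semiring axioms together with \eqref{eq:ambient-basis}, all these identities $mu\eqid mv$ follow. The plan is to check directly that a semiring $S$ satisfying \eqref{eq:ambient-basis} has its set $m S=\{ma:a\in S\}$ inside $\Slp$. Then every identity of $\Slp$ holds in $mS$, and the displayed formula $u(ma_1,\ldots,ma_k)=m\,u(a_1,\ldots,a_k)$ from the proof of Lemma~\ref{lem:skeleton} transfers it to $mu\eqid mv$ in $S$.

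The key steps, in order, are as follows.

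First, from $(m+1)x\eqid x$ derive $m\cdot mx=mx$. Indeed, $m^2x=m(m-1)x+mx$, and since $(m+1)x=x$, adding $mx$ to itself gives $2mx=mx$; iterating yields $kmx=mx$ for all $k\ge1$. Hence every element $ma$ is additively idempotent.

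Second, compute $(ma)(ma)=m^2a^2=ma^2=ma$ using $mx^2\eqid mx$. The same computation as in Lemma~\ref{lem:skeleton} gives $ma+mb=m(a+b)$ and $(ma)(mb)=m(ab)$. Thus $mS$ is a subsemiring satisfying $x+x\eqid x$ and $x^2\eqid x$, that is, $mS\in\Slp$.

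Third, for any $u\eqid v\in\Id(\Slp)$, evaluating on the elements $ma_i$ gives $m\,u(\bar a)=u(m\bar a)=v(m\bar a)=m\,v(\bar a)$. So \eqref{eq:malcev-basis} holds and Lemma~\ref{lem:skeleton} applies. The converse direction is immediate, since $mx^2\eqid mx$ is an instance of \eqref{eq:malcev-basis} from $x^2\eqid x\in\Id(\Slp)$.

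For $n=2$ we have $m=2$, and we must derive $2x^2\eqid 2x$ from $x^2\eqid x$. This is trivial: multiplying $x^2=x$ by $2$ gives $2x^2=2x$. The only subtlety is that $(m+1)x\eqid x$ becomes $3x\eqid x$, which is listed.

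The main (modest) obstacle is the bookkeeping in the first step, namely confirming that $(m+1)x\eqid x$ alone forces $ma$ to be additively idempotent and that $u(m\bar a)=m\,u(\bar a)$ holds for arbitrary terms. This needs only distributivity and the fact that $m^k a=ma$ for monomials of degree $k$, both of which I expect to go through routinely.
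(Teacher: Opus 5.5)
Your proof is correct and follows the paper's argument. The paper likewise observes that under $(m+1)x\eqid x$ the map $x\mapsto mx$ is a retraction onto the additive idempotents, notes that $mx^2\eqid mx$ makes its image multiplicatively idempotent, and then invokes Lemma~\ref{lem:skeleton}; your write-up supplies the routine verifications the paper leaves implicit, namely $2mx=mx$, $m^k y=my$, and $u(m\bar a)=m\,u(\bar a)$.
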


\begin{proof}
Under $(m+1)x\eqid x$, the map $x\mapsto mx$ is the
retraction onto the additive idempotents. The last identity
of \eqref{eq:ambient-basis} makes its image multiplicatively
idempotent, so Lemma~\ref{lem:skeleton} identifies the
defined variety. If $n=2$, the last identity is
a consequence of $x^2\eqid x$.
\end{proof}

\section{Subdirectly irreducible semirings}

The restriction to one nontrivial ring component uses the
following idempotent endomorphisms.

\begin{lemma}\label{lem:corners}
Every member of $\Rn\circ\Slp$ satisfies
\begin{equation}\label{eq:corner}
 t^{n-1}xt^{n-1}yt^{n-1}
 \eqid t^{n-1}xyt^{n-1}.
\end{equation}
For every $c\in S$, this identity makes the map
\[
 \pi_h:S\longrightarrow S,\qquad \pi_h(x)=hxh,
 \qquad h=c^{n-1},
\]
an idempotent endomorphism.
\end{lemma}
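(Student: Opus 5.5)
By Theorem~\ref{thm:join}, $\Rn\circ\Slp=\Rn\vee\Slp$, so an identity holds in $\Rn\circ\Slp$ as soon as it holds both in $\Rn$ and in $\Slp$. The join is $\HSP(\Rn\cup\Slp)$, and identities are preserved by $\HSP$. I will therefore verify \eqref{eq:corner} separately in the two generating varieties, and then derive the endomorphism property directly from the identity.

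\emph{Idempotent semirings.} In $\Slp$ one has $t^2\eqid t$, so $t^{n-1}\eqid t$, and \eqref{eq:corner} reads $txtyt\eqid txyt$. This is the regular band identity \eqref{eq:regular-band} $xyxzx\eqid xyzx$ after the renaming $x\mapsto t$, $y\mapsto x$, $z\mapsto y$. Hence \eqref{eq:corner} holds in $\Slp$.

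\emph{Rings.} Every member of $\Rn$ is commutative by Jacobson's theorem; alternatively, Lemma~\ref{lem:rings} makes it a subdirect product of fields. Put $e=t^{n-1}$. The identity $t^n\eqid t$ gives $e^2\eqid e$: for $n=2$ this is $t^2\eqid t$ itself, and for $n>2$ we have $t^{2n-2}=t^n t^{n-2}=t^{n-1}$. By commutativity both sides of \eqref{eq:corner} equal $e\,xy$, so \eqref{eq:corner} holds in $\Rn$. Combining the two cases, \eqref{eq:corner} holds in all of $\Rn\circ\Slp$.

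\emph{The map $\pi_h$.} Let $S\in\Rn\circ\Slp$, $c\in S$ and $h=c^{n-1}$. Since $S$ satisfies $x^n\eqid x$, the computation above shows $h^2=h$ in $S$ as well. The map $\pi_h(x)=hxh$ is then checked as follows.
\begin{itemize}
\item It is additive by two-sided distributivity: $h(x+y)h=hxh+hyh$.
\item It is multiplicative by \eqref{eq:corner} with $t=c$: $\pi_h(x)\pi_h(y)=hxh\,hyh=hxhyh=hxyh=\pi_h(xy)$, where $h^2=h$ is used to merge the middle factors.
\item It is idempotent, since $\pi_h(\pi_h(x))=h^2xh^2=hxh$.
\end{itemize}

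No step is a serious obstacle. The only point needing care is the reduction to $\Rn$ and $\Slp$ through the join equality of Theorem~\ref{thm:join}, together with the observation that $h^2=h$ follows from $x^n\eqid x$ (including the case $n=2$).
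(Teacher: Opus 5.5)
Your proof is correct and follows the paper's argument. You verify \eqref{eq:corner} in $\Rn$ via commutativity and $h^2=h$, and in $\Slp$ via the regular band identity \eqref{eq:regular-band}, transfer it to $\Rn\circ\Slp$ through Theorem~\ref{thm:join}, and then check additivity, multiplicativity and idempotence of $\pi_h$ directly, with your explicit derivation of $h^2=h$ from $x^n\eqid x$ (including $n=2$) merely spelling out a detail the paper leaves implicit.
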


\begin{proof}
In $\Rn$, commutativity and the multiplicative
idempotence of $t^{n-1}$ establish the identity.
Its restriction to $\Slp$ is \eqref{eq:regular-band}.
Theorem~\ref{thm:join} gives \eqref{eq:corner}.
Distributivity proves additive compatibility of $\pi_h$,
and
\[
 (hxh)(hyh)=hxh yh=hxyh
\]
proves multiplicative compatibility. The equality $h^2=h$
gives $\pi_h^2=\pi_h$.
\end{proof}

\begin{proposition}\label{prop:one-field}
Let $S\in\Rn\circ\W$ be subdirectly irreducible and not
additively idempotent. Then $S$ has a multiplicative identity,
and its monolith is $\Hp$. Exactly one ring component of
$S$ is nontrivial. This component is a finite field
$\F_q$, where $q-1\mid n-1$.
\end{proposition}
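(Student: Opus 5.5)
The plan has three stages. First I locate the monolith $\mu$ inside $\Hp$ and use it to produce a multiplicative identity. Then I show that $\mu=\Hp$ and that the component containing the identity is a field. Finally I rule out any other nontrivial component.

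\emph{Stage 1.} By Lemma~\ref{lem:skeleton}, $\Hp=\ker\eps$ is a congruence. It is nontrivial because $S$ is not additively idempotent, so some $R_k$ has at least two elements. Hence $\mu\subseteq\Hp$. Pick $(x,y)\in\mu$ with $x\neq y$; then $x,y$ lie in a common component $R_k$. Let $d=x+(m-1)y$, which is $x-y$ in the ring $R_k$ and is nonzero. Put $h=d^{n-1}\in R_k$, an idempotent with $hd=d$. By Lemma~\ref{lem:corners}, $\pi_h(s)=hsh$ is an idempotent endomorphism, so $\ker\pi_h\in\Con(S)$. Since $R_k$ is a commutative ring, we have
\[
\pi_h(x)+(m-1)\pi_h(y)=hdh=d\neq k ,
\]
so $\pi_h(x)\ne\pi_h(y)$. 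Thus $(x,y)\notin\ker\pi_h$, and this forces $\ker\pi_h=\Delta_S$. An injective idempotent map is the identity, so $hsh=s$ for all $s$. Then $hs=h(hsh)=hsh=s$, and symmetrically $sh=s$. So $1:=h\in R_k$ is a multiplicative identity, and its support $k$ is the identity of $I$.

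\emph{Stage 2.} The restriction of $\mu$ to $R_k$ is a ring congruence, and its zero class $J$ is an ideal of $R_k$. Since $(d,k)\in\mu$, we have $d\in J$, so $1=d^{n-1}\in J$ and $(1,k)\in\mu$. Multiplying by an arbitrary $s$ gives $(s,ks)\in\mu$. Moreover $ks=\eps(1)s=\eps(1s)=ms$, so $(s,ms)\in\mu$ for every $s$. Because $\Hp$ is generated by the pairs $(s,\eps(s))$, this yields $\Hp\subseteq\mu$, hence $\mu=\Hp$. The same ideal argument shows $R_k$ is simple: every nonzero ideal contains $1$. So $R_k$ is a field in $\Rn$, and by Lemma~\ref{lem:rings} it is $\F_q$ with $q-1\mid n-1$.

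\emph{Stage 3 (main obstacle).} I must show that $R_j$ is trivial for every $j\neq k$. My plan is to build a congruence $\theta$ that collapses each $R_j$ with $j\ne k$ to its zero $j$ and leaves $R_k$ untouched. Then $\theta\subseteq\Hp=\mu$ while $\theta$ does not contain $\Hp$ unless every such $R_j$ is trivial. Minimality of $\mu$ then forces $\theta=\Delta_S$, which is exactly the claim.

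Compatibility of $\theta$ with the operations is the delicate point. I need to know which products and sums of indices can land in $k$. I would try to show that $i+j=k$ or $ij=k$ forces $i=j=k$.
\begin{itemize}
\item For products this uses that $k$ is the multiplicative identity of the band $I$. From $ij=k$ we get $i=iki=iiji=iji$, and regularity~\eqref{eq:regular-band} together with monotonicity should give $i\geq k$ and $i\le k$.
\item For sums it is less clear, since $k$ need not be the top of $I$. There I would instead use the connecting maps $\phi_{i,k}$ from Lemma~\ref{lem:connecting} when $i\le k$.
\item If this fails, I would fall back on the alternative congruence $\ker\pi_{h_b}$ with $h_b=b^{n-1}$ for a nonzero $b\in R_j$. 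The aim would be to show directly that this congruence is trivial, so that $h_b$ is also an identity. Uniqueness of the identity then gives $j=\eps(h_b)=\eps(1)=k$.
\end{itemize}
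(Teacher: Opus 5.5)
Your Stages 1 and 2 follow the paper's proof. The paper obtains $h\mathrel\mu e$ by raising $c\mathrel\mu e$ to the $(n-1)$st power, which is what your zero-class ideal $J$ accomplishes. It then uses $s=sh\mathrel\mu se=ms$, exactly as you do.

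Stage 3, however, is not proved, and it carries the assertion that exactly one component is nontrivial. Your primary plan rests on the claim that $i+j=k$ forces $i=j=k$. This is false: every $i<k$ satisfies $i+k=k$. Compatibility of $\theta$ with addition would then require each $\phi_{i,k}$ with $i<k$ to be constant on $R_i$, and that is not a structural fact. In $F\times I$, where $I$ has identity $e$ and some $i<e$, the connecting map sends $(1_F,i)$ to the identity $(1_F,e)$. Any proof of constancy must therefore invoke subdirect irreducibility again, and "use the connecting maps" supplies no such argument. (Your product claim is true, and follows directly from $i=ik=i(ij)=ij=k$.)

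Your fallback is the paper's route, but you leave open the one step that makes it work, even though it is your Stage 1 argument verbatim. Once $\mu=\Hp$ is known, any $b\in R_j$ with $b\neq j$ yields the pair $(b,j)\in\mu$. With $h_b=b^{n-1}$ one has $\pi_{h_b}(b)=b\neq j=\pi_{h_b}(j)$, since $h_bjh_b$ is an additive idempotent in $R_j$. Hence $\mu\not\subseteq\ker\pi_{h_b}$, which forces $\ker\pi_{h_b}=\Delta_S$. So $h_b$ is a multiplicative identity, hence $h_b=1$ and $j=\eps(h_b)=k$.

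Your Stage 2 claim that every nonzero ideal of $R_k$ contains $1$ actually needs this same observation with $j=k$. Your ideal argument gives $d^{n-1}=1$ only for the particular $d$ coming from the monolith pair, not for an arbitrary nonzero element of $R_k$.
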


\begin{proof}
Let $\mu$ be the monolith of $S$. Since $\Hp$ is
nontrivial, $\mu\subseteq\Hp$. Choose $a\ne b$ with
$a\mathrel\mu b$. These elements belong to a common
component $R_e$. In that ring, put
$c=a-b\ne e$ and $h=c^{n-1}$. Since $hch=c$, the
endomorphism $\pi_h$ from Lemma~\ref{lem:corners}
separates $a$ and $b$, so $\mu$ is not contained in
$\ker\pi_h$. Subdirect irreducibility gives
$\ker\pi_h=\Delta_S$. An injective idempotent endomorphism
is the identity, so $hxh=x$ for every $x\in S$.
Multiplying this equality by $h$ on either side gives
$hx=xh=x$.

From $a\mathrel\mu b$, addition of $-b$ in $R_e$
gives $c\mathrel\mu e$. Taking $(n-1)$st powers yields
$h\mathrel\mu e$. Since $e=mh$, every $s\in S$ satisfies
\[
 s=sh\mathrel\mu se=s(mh)=ms=\eps(s).
\]
This gives $\Hp\subseteq\mu$; together with the inclusion
established at the start of the proof, it yields
\begin{equation}\label{eq:monolith}
 \mu=\Hp.
\end{equation}

Suppose that $R_f$ is any nontrivial ring component and
choose $d\ne f$ in $R_f$. The endomorphism
$\pi_{d^{n-1}}$ separates $d$ and $f$. By
\eqref{eq:monolith}, its kernel cannot be nontrivial.
As before, $d^{n-1}$ is a multiplicative identity of $S$,
and uniqueness gives $d^{n-1}=h$. Taking supports yields
$f=e$, so $R_e$ is the unique nontrivial ring component.

Every nonzero $d\in R_e$ satisfies $d^{n-1}=h$,
which makes $R_e$ a field. Lemma~\ref{lem:rings}
identifies it as $R_e\cong\F_q$ with $q-1\mid n-1$.
\end{proof}

For the semirings in Proposition~\ref{prop:one-field},
the mixed operations between the field component and
the idempotent skeleton remain to be specified.

\begin{definition}\label{def:construction}
Let $F\in\Rn$ be a finite field, and let $I\in\Slp$
have a multiplicative identity $e$. Define $A(F,I,e)$
on the set $F\cup I$, with $F\cap I=\{0_F\}=\{e\}$.
Retain the operations on $F$ and $I$, and prescribe
the mixed operations by
\begin{equation}\label{eq:mixed-product}
 ri=ir=i\qquad(r\in F,\ i\in I)
\end{equation}
and
\begin{equation}\label{eq:mixed-sum}
 r+i=i+r=
 \begin{cases}
  r,&i\leq e,\\
  e+i,&i\not\leq e.
 \end{cases}
\end{equation}
\end{definition}

\begin{lemma}\label{lem:construction}
The operations in Definition~\ref{def:construction} make
$A(F,I,e)$ a semiring in $\HS(F\times I)$. Its skeleton
is $I$, its unique nontrivial ring component is $F$,
and
\begin{equation}\label{eq:construction-variety}
 \Var(A(F,I,e))=\Var(F)\vee\Var(I).
\end{equation}
The multiplicative identity of $A(F,I,e)$ is $1_F$.
\end{lemma}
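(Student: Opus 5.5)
Rather than verifying the semiring axioms on $F\cup I$ directly, I will realize $A(F,I,e)$ as a quotient of a subsemiring of $F\times I$. The construction mirrors the end of the proof of Lemma~\ref{lem:separation}, with $V=I$ and $D=\down e$. In $F\times I$, consider the subsemiring
\[
 T=\{(r,i)\in F\times I: i\leq e\ \text{or}\ r=0\},
\]
which is closed because $i+j\leq e$ forces $i,j\leq e$. It is also closed under products: if $ij\not\leq e$ then at least one of $i,j$ is not below $e$, since otherwise $ij\leq e^2=e$; so one factor has first coordinate $0$ and the product does too.

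Let $\theta$ be the equivalence on $T$ that collapses each fiber $\{(r,i):r\in F\}$ with $i\leq e$, $i\ne e$, to the point $(0,i)$. It keeps the fiber over $e$ intact, and fibers over $i\not\leq e$ are already singletons. I must check that $\theta$ is a congruence.

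\begin{itemize}
\item For addition, if $i<e$ and $j\in I$ with $i+j=e$, then $j\leq e$, so $j=e$ or $j<e$. The first coordinate of $(r,i)+(s,j)$ is $r+s$, which depends on $r$. So this naive $\theta$ fails.
\end{itemize}

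The fix is to collapse differently: identify $(r,i)$ with $(r,e)$ whenever $i\leq e$, through the map $(r,i)\mapsto(r,e)$. Concretely, define $\kappa:T\to A(F,I,e)$ by $\kappa(r,i)=r$ if $i\leq e$ and $\kappa(0,i)=i$ if $i\not\leq e$. The plan is to show that $\kappa$ is a surjective map preserving both operations. Once it is, its kernel is a congruence, and the image, with the operations transported from $T$, coincides with the operations of Definition~\ref{def:construction}. Hence $A(F,I,e)\in\HS(F\times I)$, and being a semiring is automatic.

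Surjectivity is clear, since $r=\kappa(r,e)$ and $i=\kappa(0,i)$. Compatibility splits by whether the indices lie below $e$.
\begin{itemize}
\item Sums with $i,j\leq e$ give $r+s$.
\item Sums with $i\leq e$, $j\not\leq e$ give $\kappa(0,i+j)$, and $i+j=e+j$ since $e+j\geq i+j\geq e+j$ when $i\leq e$. This matches \eqref{eq:mixed-sum}.
\item Products with $i,j\leq e$ land in $\down e$ with first coordinate $rs$, matching the field product. Here I need $ij\leq e$, which holds.
\item Products with one index $j\not\leq e$ have first coordinate $0$ and index $ij$. If $ij\not\leq e$ the image is $ij$. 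The mixed rule \eqref{eq:mixed-product} demands $r\cdot j=j$. Taking $i=e$ here gives $ej=j$ because $e$ is the identity of $I$. But $\kappa(r,i)$ for $i<e$ also equals $r$, yet $ij$ may differ from $j$, so $\kappa$ is not well defined as a homomorphism on all of $T$.
\end{itemize}

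To repair this last failure, I restrict to the subsemiring $T'=(F\times\{e\})\cup(\{0\}\times I)$. It is closed: $e$ is the identity of $I$, and $(r,e)+(0,i)=(r,e+i)$ lies in $T'$ only if $e+i=e$ (that is, $i\leq e$) or $r=0$. That is false in general, so I instead take $T''=(F\times\{e\})\cup(\{0\}\times I)\cup(F\times \up e)$. Then $\kappa(r,i)=r$ for $i=e$, $\kappa(0,i)=i$ for $i\not\geq e$ or $i\leq e$, and $\kappa(r,j)=j$ for $j>e$. Checking closure of $T''$ and homomorphism of $\kappa$ by the same case split (using $ej=je=j$ and $e+i\geq e$) is the main obstacle. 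I expect it to work because multiplying by $e$ fixes everything.

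Finally, the skeleton is $\{ma\}=I$, the only nontrivial component is the fiber over $e$, namely $F$, and $1_F\cdot i=i$ with $1_F$ the identity of $F$, so $1_F$ is a multiplicative identity. For \eqref{eq:construction-variety}, the inclusion $\subseteq$ follows from membership in $\HS(F\times I)$. For $\supseteq$, note that $F$ and $I$ are subsemirings of $A(F,I,e)$; alternatively, apply \eqref{eq:individual-variety} from Theorem~\ref{thm:join}.
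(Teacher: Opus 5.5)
Your final set $T''=(F\times\up e)\cup(\{0_F\}\times I)$ (the piece $F\times\{e\}$ is redundant) and your final map $\kappa$, which keeps the fiber over $e$ and collapses every other fiber to its index, are exactly the paper's $T$ and its collapsing congruence. There is still a gap: you stop where the substance of the lemma lies. You call the check that $\kappa$ preserves both operations ``the main obstacle'' and only say you expect it to work. The tools you name, $ej=je=j$ and $e+i\geq e$, handle the cases where one index equals $e$. They do not handle two indices $i,j\neq e$ whose product or sum is $e$.

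For products this case breaks $\kappa$ unless it is excluded. Suppose $i,j>e$ and $ij=e$. Then $(r,i)(s,j)=(rs,e)$ is sent to $rs$, while $\kappa(r,i)\kappa(s,j)=ij=e=0_F$. The missing ingredient is the fact the paper uses: in a band with identity $e$, $ij=e$ forces $i=ie=i(ij)=ij=e$, and then $j=e$. Alternatively, for $i,j\geq e$, order preservation gives $e=ij\geq ie=i\geq e$.

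For sums, $i+j=e$ with $i,j\neq e$ forces $i,j<e$. Both elements then lie in $\{0_F\}\times I$, their sum is $(0_F,e)$, and $\kappa$ sends it to $0_F=e=i+j$, as required.

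With these two observations the remaining cases are routine. Your treatment of the skeleton, the unique nontrivial component, the identity $1_F$, and both inclusions in \eqref{eq:construction-variety} is fine. Delete the false starts from the write-up. In particular, your first set $\{(r,i):i\leq e\text{ or }r=0\}$ is not closed under addition: $(r,i)+(0_F,j)$ with $r\neq0$, $i\leq e$, $j\not\leq e$ leaves it. The analogy with Lemma~\ref{lem:separation} needs an upward closed index set.
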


\begin{proof}
Consider
\[
 T=(F\times\up e)\ \cup\ (\{0_F\}\times I).
\]
The order is compatible with multiplication and $e^2=e$,
so $\up e$ is closed under multiplication and is upward
closed under addition. These closure properties give
$T\leq F\times I$.
On $T$, collapse each fiber with index different from $e$
to one point, retaining every element of the fiber over $e$.

This equivalence is a congruence. A fiber which is
nontrivially collapsed has index $i>e$, and addition cannot
send this index back to $e$. For multiplication, a band with
identity $e$ satisfies the implication from $ij=e$ to $i=j=e$:
$i=i(ij)=i^2j=ij=e$, and then $j=e$.
Multiplication cannot distinguish elements of a collapsed
fiber in the fiber over $e$. All other target
fibers are singletons.

The resulting quotient is the algebra in
Definition~\ref{def:construction}. The two definitions
of its operations agree at the common element $e=0_F$.
The embedded copies of $F$ and $I$ give one inclusion in
\eqref{eq:construction-variety}; membership in
$\HS(F\times I)$ gives the other.
Equations~\eqref{eq:mixed-product} and \eqref{eq:mixed-sum}
identify the skeleton, the ring component, and the
multiplicative identity.
\end{proof}

\begin{lemma}\label{lem:reconstruction}
Every subdirectly irreducible semiring $S\in\Rn\circ\W$
that is not additively idempotent is isomorphic to $A(F,I,e)$
for some
$F=\F_q$, $q-1\mid n-1$, and $I\in\W$ with multiplicative
identity $e$.
\end{lemma}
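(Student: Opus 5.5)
Let $S\in\Rn\circ\W$ be subdirectly irreducible and not additively idempotent. By Proposition~\ref{prop:one-field}, $S$ has a multiplicative identity $h$ and monolith $\Hp$. Exactly one ring component $R_e$ is nontrivial, and it is isomorphic to $F=\F_q$ with $q-1\mid n-1$. By Lemma~\ref{lem:skeleton}, the skeleton $I=E^+(S)$ lies in $\W$, so as a set $S=R_e\cup I$ with $R_e\cap I=\{e\}$. The map $\eps(x)=mx$ is a homomorphism, so $e=\eps(h)$ is a multiplicative identity of $I$: for $i\in I$ we have $ei=\eps(h)\eps(i)=\eps(hi)=i$, and similarly $ie=i$. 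The identity of $R_e$ is $h$, since $h=d^{n-1}$ for nonzero $d\in R_e$. The plan is to show that the bijection $S\to A(F,I,e)$, which is the identity on $I$ and a fixed ring isomorphism $R_e\cong F$ sending $e$ to $0_F$, preserves the mixed operations \eqref{eq:mixed-product} and \eqref{eq:mixed-sum}. The operations inside $R_e$ and inside $I$ are preserved automatically.

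For products, take $r\in R_e$ and $i\in I$. The product $ri$ is additively idempotent, because $ri+ri=r(i+i)=ri$. Its support is $ei=i$, so $ri=i$; this is the computation $ae=ie$ from Lemma~\ref{lem:connecting}, applied with $i$ in place of $e$. Symmetrically $ir=i$, which gives \eqref{eq:mixed-product}.

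For sums, take $r\in R_e$ and $i\in I$. The support of $r+i$ is $e+i$. If $e+i\ne e$, that is $i\not\leq e$, then $r+i$ lies in the trivial component $R_{e+i}=\{e+i\}$, so $r+i=e+i$. If $i\leq e$, then $r+i\in R_e$, and $\phi_{i,e}\colon R_i\to R_e$ maps the trivial ring $\{i\}$ to the zero, so $i+e=e$. We must show $r+i=r$. Write $r=r+e=r+(e+i)=(r+e)+i$, which only restates the goal, so a different route is needed. Use $\alpha_e$ from Lemma~\ref{lem:connecting}: for $i\leq e$ and $r\in R_e$, the element $r+i$ lies in $R_e$, and adding $e$ fixes elements of $R_e$. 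Hence $r+i=(r+i)+e=r+(i+e)=r+e=r$. So $r+i=r$, which is \eqref{eq:mixed-sum}.

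The hard part is already contained in Proposition~\ref{prop:one-field}, namely uniqueness of the nontrivial component and existence of the identity. Here the only delicate points are verifying that $e$ is the identity of $I$ and checking the case $i\leq e$ of \eqref{eq:mixed-sum}. Both reduce to support computations via the retraction $\eps$.
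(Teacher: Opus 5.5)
Your proof is correct and follows the paper's argument step for step. You show $e$ is the identity of $I$ via $\eps(h)=e$, get $ri=ir=i$ from a support computation, and handle the two cases of \eqref{eq:mixed-sum} by supports. The chain $r=r+e=r+(e+i)=(r+e)+i$ that you set aside as circular already proves the case $i\leq e$, since $r+e=r$ makes its last term equal to $r+i$. It is exactly the paper's computation.
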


\begin{proof}
By Proposition~\ref{prop:one-field}, $S$ has a unique
nontrivial ring component $F=R_e$ and a multiplicative
identity $h=1_F$. All other ring components are singletons.
Its underlying set is $F\cup I$, with
$F\cap I=\{e\}$. For $i\in I$,
\[
 ei=(mh)i=m(hi)=mi=i,
\]
and the calculation on the other side gives $ie=i$.
Together these identify $e$ as the multiplicative identity
of $I$.

For $r\in F$ and $i\in I$, the product $ri$ is
additively idempotent with support $ei=i$, which gives
$ri=i$. Reversing the multiplication gives $ir=i$.
If $i\leq e$, then
$r+i=r+e+i=r+e=r$. Otherwise $r+i$ belongs to the
singleton component with support $e+i\ne e$, so
$r+i=e+i$.
\end{proof}

For an algebra $I$, let $\Pol_1(I)$ denote its unary
polynomial functions, allowing parameters from $I$.
If $D\subseteq I$, define
\begin{equation}\label{eq:syntactic}
 i\equiv_D j
 \quad\Longleftrightarrow\quad
 \bigl(p(i)\in D\ \Longleftrightarrow\ p(j)\in D\bigr)
 \text{ for every }p\in\Pol_1(I).
\end{equation}
This is the \emph{syntactic congruence} of $D$ on $I$;
see~\cite[Lemma~3.1]{AGK} for the equivalent formulation
using linear unary polynomials.

\begin{lemma}\label{lem:syntactic}
The relation $\equiv_D$ is the largest congruence of $I$
for which $D$ is a union of congruence classes.
\end{lemma}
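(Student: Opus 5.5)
The plan has three parts: show that $\equiv_D$ is a congruence, show that $D$ is a union of its classes, and show that every congruence with this property is contained in it. All three rest on one fact. Unary polynomial functions of $I$ are closed under composition, and for each basic operation the functions obtained by fixing all arguments but one are unary polynomials. Concretely, for $p\in\Pol_1(I)$ and $c\in I$, the maps $x\mapsto p(x+c)$, $x\mapsto p(xc)$ and $x\mapsto p(cx)$ again belong to $\Pol_1(I)$.

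First, $\equiv_D$ is an equivalence relation, because it is defined by a conjunction of ``iff'' conditions. For compatibility, suppose $i\equiv_D j$ and $c\in I$, and let $p\in\Pol_1(I)$ be arbitrary. Apply the definition to the polynomial $q(x)=p(x+c)$. This gives $p(i+c)\in D$ if and only if $p(j+c)\in D$, so $i+c\equiv_D j+c$. The same argument with $x\mapsto p(xc)$ and $x\mapsto p(cx)$ handles multiplication on either side. Compatibility in one variable at a time, together with transitivity, gives compatibility of the binary operations: $i+i'\equiv_D j+i'\equiv_D j+j'$, and similarly for products. Next, take $p$ to be the identity polynomial. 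Then $i\equiv_D j$ implies that $i\in D$ if and only if $j\in D$, so $D$ is a union of $\equiv_D$-classes.

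For maximality, let $\theta\in\Con(I)$ have $D$ as a union of $\theta$-classes, and suppose $i\mathrel\theta j$. Every unary polynomial preserves $\theta$; this follows by induction on the construction of polynomials, since each step is a basic operation applied to terms that are either constants or previously built polynomials. Hence $p(i)\mathrel\theta p(j)$ for every $p\in\Pol_1(I)$. Because $D$ is a union of $\theta$-classes, $p(i)\in D$ if and only if $p(j)\in D$, so $\theta\subseteq{\equiv_D}$. Together with the first part, this shows $\equiv_D$ is the largest such congruence.

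No step here is a serious obstacle. The only point needing care is the closure of $\Pol_1(I)$ under substitution of $x+c$, $xc$ and $cx$ for the variable. This is immediate from the definition of polynomial functions as the term functions of $I$ with parameters adjoined. The argument uses nothing specific to semirings beyond the two binary operations, so it could also be dispatched by citing the general fact in \cite{AGK}.
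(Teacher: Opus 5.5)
Your proposal is correct and matches the paper's proof step for step. Both establish saturation of $D$ via the identity polynomial, get compatibility by composing with the one-variable translations $x\mapsto x+c$, $xc$, $cx$ and changing one argument at a time, and obtain maximality from the fact that every congruence is preserved by unary polynomials.
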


\begin{proof}
The relation defined by the polynomial condition is an
equivalence relation.
The identity polynomial ensures that it saturates $D$,
and composition preserves it under every unary polynomial.
For a basic binary operation, replacing the two arguments
one at a time, with the other fixed as a parameter,
establishes compatibility, making $\equiv_D$ a congruence.
Any congruence $\tau$ saturating $D$ preserves membership
after polynomial evaluation: $i\mathrel\tau j$ implies
$p(i)\mathrel\tau p(j)$ for every unary polynomial $p$,
and hence $i\equiv_D j$.
\end{proof}

\begin{theorem}\label{thm:criterion}
Let $S=A(F,I,e)$, where $F\in\Rn$ is a finite field and
$I\in\Slp$ has identity $e$. Put $D=\down e$. Then
\begin{equation}\label{eq:si-criterion}
 S\text{ is subdirectly irreducible}
 \quad\Longleftrightarrow\quad
 \equiv_D\,=\,\Delta_I.
\end{equation}
When these equivalent conditions hold, the monolith
has $F$ as its unique nonsingleton class, and
\begin{equation}\label{eq:congruence-lattice}
 \Con(S)=\{\Delta_S\}\ \cup\
 \{\eps^{-1}(\tau):\tau\in\Con(I)\},
\end{equation}
where $\eps^{-1}(\tau)=
\{(a,b):(\eps(a),\eps(b))\in\tau\}$.
\end{theorem}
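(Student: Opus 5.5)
The plan is to describe $\Con(S)$ completely and then read off both directions of \eqref{eq:si-criterion}.

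\emph{Step 1: congruences outside $\Hp$.} Let $\theta\in\Con(S)$ with $\theta\ne\Delta_S$. We show $\Hp\subseteq\theta$. There are two cases.

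If $\theta$ identifies two distinct elements of $F$, then subtraction in $F$ gives $c\mathrel\theta e$ for some $c\ne e$. Taking $(n-1)$st powers gives $1_F\mathrel\theta e$. Multiplying by any $s$ gives $s\mathrel\theta es=ms=\eps(s)$, exactly as in the proof of Proposition~\ref{prop:one-field}. Hence $\Hp\subseteq\theta$.

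Otherwise $\theta$ identifies $a\ne b$ with $\eps(a)\ne\eps(b)$. Since $\eps$ is a retraction given by the term $mx$, we get $\eps(a)\mathrel\theta\eps(b)$. So $\theta$ contains a pair of distinct idempotents $i\ne j$, and we must produce $1_F\mathrel\theta e$. We use $\equiv_D$ here, and this is the step we expect to be the main obstacle in the general description of $\Con(S)$. We choose a different route that avoids it: we only prove $\theta=\eps^{-1}(\theta|_I)$ when $\theta$ already contains $\Hp$, and treat the remaining case in Step 3.

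\emph{Step 2: congruences containing $\Hp$.} If $\Hp\subseteq\theta$, then $\theta=\ker(\eps)\vee\theta$. Since $\eps$ is a retraction onto $I$, this gives $\theta=\eps^{-1}(\tau)$ with $\tau=\theta\cap(I\times I)\in\Con(I)$. Conversely, every $\eps^{-1}(\tau)$ is a congruence, because $\eps$ is a homomorphism. This yields \eqref{eq:congruence-lattice} once we know that every nontrivial congruence contains $\Hp$. In that situation $\Hp=\eps^{-1}(\Delta_I)$ is the monolith, and its only nonsingleton class is $F$.

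\emph{Step 3: reducing to $\equiv_D$.} Let $\theta$ be a congruence containing no nontrivial pair inside $F$, and define $\tau=\theta|_I$.

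First, $\tau$ saturates $D$. Suppose $i\mathrel\tau j$ with $i\le e$ and $j\not\le e$. Add $r=1_F$ to both sides. By \eqref{eq:mixed-sum}, $1_F+i=1_F$ and $1_F+j=e+j$. So $1_F\mathrel\theta e+j$, and therefore $e=m1_F\mathrel\theta m(e+j)=e+j$. Transitivity then gives $1_F\mathrel\theta e$, a nontrivial pair in $F$, which is a contradiction. Hence $\tau\subseteq{\equiv_D}$ by Lemma~\ref{lem:syntactic}.

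Second, $\tau$ decides whether $\theta$ is trivial. If $\equiv_D=\Delta_I$, then $\tau=\Delta_I$. Since $\eps$ respects $\theta$, every $\theta$-pair then lies in a common ring component. Those are singletons except $F$, so $\theta=\Delta_S$. Thus every nontrivial congruence falls under Step 1's first case, and $S$ is subdirectly irreducible with the stated $\Con(S)$.

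\emph{Step 4: the converse.} Suppose $\equiv_D\ne\Delta_I$. Define $\theta$ on $S$ to be $\equiv_D$ on $I$ and the identity on $F\setminus\{e\}$. We check that $\theta$ is a congruence. Multiplication by $r\in F$ fixes $I$ by \eqref{eq:mixed-product}. For addition $r+i$, elements $i\equiv_D j$ are either both in $D$, giving the same sum $r$, or both outside $D$, giving $e+i\equiv_D e+j$. Also $\theta\cap\Hp=\Delta_S$ while $\theta\ne\Delta_S$. Together with $\Hp\ne\Delta_S$, this gives two nontrivial congruences with trivial meet, so $S$ is not subdirectly irreducible. This completes \eqref{eq:si-criterion}.
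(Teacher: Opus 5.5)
Your proof is correct and follows essentially the paper's route. Congruences nontrivial on $F$ contain $\ker\eps=\Hp$. Congruences trivial on $F$ restrict to congruences of $I$ saturating $D=\down e$, which you show by adding $1_F$ and, where the paper applies Lemma~\ref{lem:syntactic} by name, you reach its saturation conclusion directly. Conversely, $\equiv_D$ extends by singletons on $F\setminus\{e\}$ to a congruence meeting $\Hp$ trivially. You verify \eqref{eq:congruence-lattice} directly through the retraction $\eps$, where the paper invokes the correspondence theorem for $S/\Hp\cong I$. The second case of your Step~1 is not an argument, and its stated goal $1_F\mathrel\theta e$ is unattainable in general (your Step~4 congruence is a counterexample). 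Step~3 handles that case correctly, so the passage can simply be deleted.
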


\begin{proof}
Let $H=\ker\eps$. Its only nonsingleton class is $F$.
A congruence $\theta$ that is nontrivial on $F$ identifies
all of $F$ by simplicity of the field, so $H\subseteq\theta$.

Suppose instead that $\theta|_F=\Delta_F$.
No $r\in F\setminus\{e\}$ can be $\theta$-equivalent to
an $i\in I$. Otherwise application of the unary term
$mx$ gives $e\mathrel\theta i$, and hence
$r\mathrel\theta e$, a contradiction. It follows from this
exclusion that
\[
 \theta=\tau\ \cup\ \Delta_{F\setminus\{e\}}
\]
for a congruence $\tau$ on $I$. This congruence must
saturate $D$. For if $i\mathrel\tau j$ with $i\in D$
and $j\notin D$, adding $1_F$ gives
$1_F\mathrel\theta e+j$, again identifying a nonzero
field element with an element of $I$.

Conversely, let $\tau\in\Con(I)$ saturate $D$.
Extend it by singleton classes on $F\setminus\{e\}$.
For $i\mathrel\tau j$, mixed multiplication preserves
the relation by \eqref{eq:mixed-product}. For addition
of $r\in F$, if $i,j\in D$ both sums equal $r$; if
$i,j\notin D$, the sums $e+i$ and $e+j$ are
$\tau$-equivalent. The extension is a congruence
of $S$ whose intersection with $H$ is $\Delta_S$.

By Lemma~\ref{lem:syntactic}, there is no nontrivial
congruence of this second kind exactly when
$\equiv_D=\Delta_I$. In that case every nontrivial
congruence contains $H$, so $H$ is the monolith.
If a nontrivial congruence of the second kind exists,
its intersection with $H$ is trivial, so $S$ is not
subdirectly irreducible, establishing both directions of
\eqref{eq:si-criterion}. Formula~\eqref{eq:congruence-lattice}
then follows from the correspondence theorem
\cite[Chapter~II]{BS} applied to $S/H\cong I$.
\end{proof}

\begin{theorem}\label{thm:classification}
Let $\W\leq\Slp$. Up to isomorphism, the subdirectly
irreducible members of $\Rn\circ\W$ are precisely:
\begin{enumerate}
 \item the subdirectly irreducible members of $\W$;
 \item the semirings $A(\F_q,I,e)$, where $q-1\mid n-1$,
 $I\in\W$ has multiplicative identity $e$, and for all
 distinct $i,j\in I$ there is a $p\in\Pol_1(I)$ such that
 exactly one of $p(i),p(j)$ belongs to $\down e$.
\end{enumerate}
There is no finiteness restriction on $I$.
The parameters in {\rm(2)} are unique up to isomorphism:
$A(\F_q,I,e)\cong A(\F_{q'},I',e')$ if and only if
$q=q'$ and $I\cong I'$.
\end{theorem}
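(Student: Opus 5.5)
I will split the subdirectly irreducible members $S$ of $\Rn\circ\W$ according to whether $S$ is additively idempotent. If it is, then $\eps$ is the identity, so $S=E^+(S)\in\W$ by Lemma~\ref{lem:skeleton}. Subdirect irreducibility is intrinsic to $S$, so these are exactly the members of type~(1). Conversely, every subdirectly irreducible member of $\W$ lies in $\Rn\circ\W$ by Theorem~\ref{thm:join}.

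If $S$ is not additively idempotent, Lemma~\ref{lem:reconstruction} gives $S\cong A(\F_q,I,e)$ with $q-1\mid n-1$ and $I\in\W$ having identity $e$. Theorem~\ref{thm:criterion} says such an $A(\F_q,I,e)$ is subdirectly irreducible exactly when $\equiv_{\down e}=\Delta_I$. Unwinding \eqref{eq:syntactic}, this is precisely the separation condition in~(2). For the converse inclusion, Lemma~\ref{lem:construction} together with \eqref{eq:construction-variety} puts each $A(\F_q,I,e)$ in $\Var(\F_q)\vee\Var(I)\subseteq\Rn\vee\W=\Rn\circ\W$. Since $\F_q$ is a nonzero ring component, $A(\F_q,I,e)$ is not additively idempotent, so the two families do not overlap. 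None of these steps uses finiteness of $I$: Lemma~\ref{lem:reconstruction} and Theorem~\ref{thm:criterion} were proved with no finiteness hypothesis. This justifies the remark that there is no finiteness restriction.

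For uniqueness, I will read the parameters back from the isomorphism type of $S=A(F,I,e)$. The skeleton $E^+(S)$ equals $I$, and the unique nontrivial ring component equals $F$, by Lemma~\ref{lem:construction}. An isomorphism $S\to S'$ preserves additive idempotents, so it restricts to an isomorphism $I\cong I'$. It also commutes with $\eps$, so it maps the unique nontrivial $\eps$-fiber onto the unique nontrivial $\eps$-fiber, giving $F\cong F'$ and hence $q=q'$.

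Conversely, suppose $q=q'$ and $\sigma:I\to I'$ is an isomorphism. Then $\sigma$ sends the multiplicative identity $e$ to $e'$, so $\sigma(\down e)=\down e'$. Combining $\sigma$ with a field isomorphism $\F_q\to\F_{q'}$ that agrees on the common point $e=0_F$ gives a bijection $F\cup I\to F'\cup I'$. This bijection respects \eqref{eq:mixed-product} and \eqref{eq:mixed-sum}, because those formulas refer only to the field operations, the operations of $I$, the order, and $e$. This is also why $e$ need not be listed as a separate parameter: it is determined by $I$ as its identity.

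The main point needing care is the converse direction of the uniqueness claim. I must check that the order condition $i\leq e$ in \eqref{eq:mixed-sum} is transported correctly, which holds because $\sigma$ is a semiring isomorphism and so preserves the additive order. Apart from this, the proof assembles results already established.
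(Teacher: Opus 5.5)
Your proof is correct and follows essentially the paper's own argument. You split by additive idempotence, use Lemma~\ref{lem:reconstruction}, Theorem~\ref{thm:criterion} and Lemma~\ref{lem:construction} for the members of type (2), and read the parameters back through the additive idempotents and the $\eps$-fibers for uniqueness; your additional checks (that the two families are disjoint, and that $\sigma$ carries $\down e$ to $\down e'$) only make explicit what the paper leaves implicit.
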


\begin{proof}
An additively idempotent member of $\Rn\circ\W$ coincides
with its skeleton and hence belongs to $\W$.
For all other members, necessity follows from
Lemma~\ref{lem:reconstruction} and
Theorem~\ref{thm:criterion}. Conversely,
Lemma~\ref{lem:construction} puts every algebra in
(2) in $\Rn\vee\W$, and
Theorem~\ref{thm:criterion} makes it subdirectly
irreducible.

An isomorphism preserves additive idempotents and ring
components, so it preserves both $I$ and the unique
nontrivial component $\F_q$. Conversely, isomorphisms
of the two parameters agree at their identified zero
and identity elements, and extend to an isomorphism
by \eqref{eq:mixed-product} and \eqref{eq:mixed-sum}.
\end{proof}

\section{Subvariety lattices and finite bases}

\begin{theorem}\label{thm:lattice}
For every $\W\leq\Slp$, the map
\[
 L(\Rn\circ\W)\longrightarrow L(\Rn)\times L(\W),
 \qquad
 \V\longmapsto(\V\cap\Rn,\V\cap\W)
\]
is a lattice isomorphism. Its inverse sends
$(\A,\B)$ to $\A\vee\B$.
\end{theorem}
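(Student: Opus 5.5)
Both maps are monotone, so it suffices to show that they are mutually inverse. The composite that starts from a pair is the easy direction. The composite that starts from a subvariety $\V\leq\Rn\circ\W$ will come from the finer formula \eqref{eq:individual-variety} of Theorem~\ref{thm:join}.

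\emph{Step 1: $\V=(\V\cap\Rn)\vee(\V\cap\W)$.} The inclusion $\supseteq$ is clear. For $\subseteq$, I take $S\in\V$, which lies in $\Rn\circ\W$. By Lemma~\ref{lem:skeleton}, its ring components $R_i$ and its skeleton $I$ are subsemirings of $S$. Hence $R_i\in\V\cap\Rn$ and $I\in\V\cap\W$. Equation \eqref{eq:individual-variety} then gives
\[
 \Var(S)=\Bigl(\bigvee_i\Var(R_i)\Bigr)\vee\Var(I)\subseteq(\V\cap\Rn)\vee(\V\cap\W).
\]
So the map is injective and the proposed inverse is a left inverse.

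\emph{Step 2: $(\A\vee\B)\cap\Rn=\A$ and $(\A\vee\B)\cap\W=\B$ for $\A\leq\Rn$, $\B\leq\W$.} I expect this step to be the main point. My plan is to work with identities and separate the ring part from the idempotent part.

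For the $\W$-part, I take an idempotent $T\in\A\vee\B$ and aim to show $T\in\B$. Every member of $\A$ satisfies $mx\eqid my$. Consequently, for any identity $u\eqid v$ of $\B$, the identity $mu\eqid mv$ holds in both $\A$ and $\B$, and hence in $\A\vee\B$. In $T$ we have $mx=x$, so $T\models u\eqid v$, and therefore $T\in\B$.

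For the $\Rn$-part, I take a ring $R\in\A\vee\B$ and an identity $u\eqid v$ of $\A$. I would like an identity valid in $\B$ that, in combination with $u\eqid v$, transfers to rings. The natural candidate is to add $(m+1)$-fold and $m$-fold terms, but a more robust route uses subdirectly irreducibles. By Lemma~\ref{lem:rings}, it suffices to show that every field $\F_q\in\A\vee\B$ lies in $\A$. Now $\A\vee\B$ is generated by the class $\A\cup\B$, whose members have finite skeletons (locally) and satisfy congruence-distributivity-free conditions, so Jónsson is not directly available. Instead I use Theorem~\ref{thm:join} once more: $\F_q\in\Var(\mathcal K)$ for some $\mathcal K\subseteq\A\times\B$. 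Working identitywise, let $\A=\Var(\F_{q_1},\dots)$, which is determined by the set of fields it contains, since $L(\Rn)$ is described by sets of admissible $q$ closed under subfields. I then separate fields by the identity $x^{r}\eqid x$ together with $(m+1)x\eqid x$, where $r$ is chosen as follows. If $\F_q\notin\A$, I pick $r$ with $\A\models x^r\eqid x$ but $\F_q\not\models x^r\eqid x$; taking $r-1=\operatorname{lcm}(q_i-1)$ over the fields of $\A$ works, using the subfield closure. The identity $x^r\eqid x$ also holds in $\B$, because $\B$ consists of idempotent semirings. Hence it holds in $\A\vee\B$, which contradicts $\F_q\in\A\vee\B$. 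This yields $(\A\vee\B)\cap\Rn=\A$.

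\emph{Step 3.} I conclude that the two maps are mutually inverse, order-preserving bijections, and hence a lattice isomorphism. Throughout, I rely on $\Rn\circ\W=\Rn\vee\W$ from Theorem~\ref{thm:join}, so that every $\A\vee\B$ lies in $\Rn\circ\W$.
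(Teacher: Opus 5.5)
Your Step 1 and the $\W$-half of Step 2 match the paper's proof. The $\Rn$-half of Step 2, however, has a genuine gap. Identities of the form $x^r\eqid x$ (together with $(m+1)x\eqid x$, which holds throughout $\Rn$) cannot separate an arbitrary field from an arbitrary $\A\leq\Rn$. They ignore the characteristic, and $\F_2$ satisfies every one of them.

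Concretely, take $n=3$ and $\A=\Var(\F_3)$. Your recipe gives $r=3$, and $\F_2\models x^3\eqid x$ although $\F_2\notin\A$, so no contradiction arises. The choice $r-1=\operatorname{lcm}(q_i-1)$ also fails inside a single characteristic once other characteristics enter the lcm. For $n=13$ and $\A=\Var(\F_2,\F_5,\F_7)$ you get $r=13$, which holds in $\F_4\notin\A$. Subfield closure does not rescue this, because subvarieties of $\Rn$ are generally not cut out by one power identity. The reduction to fields via Lemma~\ref{lem:rings} is fine; it is the separating identity that is missing.

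The missing idea is a way to transfer an \emph{arbitrary} identity $u\eqid v$ of $\A$ into an identity that also holds in $\B$. The paper uses $u+mv\eqid mu+v$:
\begin{itemize}
\item it holds in every member of $\Slp$, because $mx=x$ there;
\item it holds in $\A$, and on $\Rn$ it is equivalent to $u\eqid v$, because $mx$ is the zero of a ring in $\Rn$.
\end{itemize}
This is essentially the ``add $m$-fold terms'' candidate you set aside. With it, every ring in $\A\vee\B$ satisfies all identities of $\A$ directly, with no passage through subdirectly irreducible fields.

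If you prefer to keep your field-by-field route, you would need the characteristic and product identities \eqref{eq:field-selection} from Proposition~\ref{prop:ring-lattice}. These must be rewritten in semiring form, e.g. $-x$ as $(m-1)x$ and $0$ as $mx$, so that they also hold in idempotent semirings. That works, but it is more effort for the same conclusion.
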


\begin{proof}
Let $\V\leq\Rn\circ\W$. For each $S\in\V$,
all its ring components belong to $\V\cap\Rn$, and
its skeleton belongs to $\V\cap\W$.
Equation~\eqref{eq:individual-variety} gives
\begin{equation}\label{eq:subvariety-decomp}
 \V=(\V\cap\Rn)\vee(\V\cap\W).
\end{equation}

It remains to recover an arbitrary pair
$\A\leq\Rn$, $\B\leq\W$ from $\A\vee\B$.
If $u\eqid v$ holds in $\A$, then
\begin{equation}\label{eq:ring-lift}
 u+mv\eqid mu+v
\end{equation}
holds in both $\A$ and every member of $\Slp$.
On $\Rn$, this identity is equivalent to $u\eqid v$.
These identities recover $(\A\vee\B)\cap\Rn=\A$.
For an identity $u\eqid v$ of $\B$, the equation
\begin{equation}\label{eq:idempotent-lift}
 mu\eqid mv
\end{equation}
holds in $\B$ and $\Rn$ and reduces to $u\eqid v$
on $\Slp$, recovering $(\A\vee\B)\cap\W=\B$.
The maps are inverse order-preserving bijections
and are lattice isomorphisms.
\end{proof}

For a prime $p$, let
\[
 T_p(n)=\{t\geq1:p^t-1\mid n-1\},
\]
ordered by divisibility, and let $\mathcal O(T_p(n))$
be its lattice of downward closed subsets.
Only finitely many of these posets are nonempty.

\begin{proposition}\label{prop:ring-lattice}
There is a lattice isomorphism
\begin{equation}\label{eq:ring-lattice}
 L(\Rn)\cong\prod_{p\ {\rm prime}}\mathcal O(T_p(n)).
\end{equation}
The coordinate associated with $\A\leq\Rn$ at $p$ is
$\{t:\F_{p^t}\in\A\}$.
\end{proposition}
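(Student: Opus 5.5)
Every subvariety $\A\leq\Rn$ is determined by the finite fields it contains, so the plan is to make this precise. By Lemma~\ref{lem:rings}, every member of $\Rn$ is a subdirect product of fields $\F_q$ with $q-1\mid n-1$. Hence $\A=\Var(\{\F_q:\F_q\in\A\})$, and $\A\mapsto\Phi(\A)=\{\F_q\in\SI(\Rn):\F_q\in\A\}$ is injective and order preserving. Writing $q=p^t$, the condition $q-1\mid n-1$ is exactly $t\in T_p(n)$. So $\Phi(\A)$ is encoded by the family of sets $\{t:\F_{p^t}\in\A\}\subseteq T_p(n)$, one for each prime $p$. The proof then splits into two parts: identifying which families arise, and checking that joins and meets correspond.

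First, each coordinate is downward closed. If $t'\mid t$ with $t\in T_p(n)$, then $p^{t'}-1\mid p^t-1\mid n-1$, so $t'\in T_p(n)$. Moreover $\F_{p^{t'}}$ is a subfield, hence a subsemiring, of $\F_{p^t}$, so $\F_{p^{t'}}\in\A$ whenever $\F_{p^t}\in\A$.

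Second, every family $(X_p)$ of downward closed sets arises, and the key claim is
\[
\F_q\in\Var(\{\F_{p^t}:t\in X_p,\ p\text{ prime}\})
\quad\Longleftrightarrow\quad q=p^t\text{ with }t\in X_p .
\]
Only finitely many fields occur here, since $q\leq n$. The generating class is therefore finite, and by Jónsson's lemma for the congruence distributive variety $\Rn$ (ring congruences are ideals, and rings are congruence modular; more simply, $\Rn$ is congruence distributive because every member is a commutative von Neumann regular ring), the subdirectly irreducibles of the generated variety lie in $\HS P_u$ of the generators. Ultraproducts of finitely many finite algebras give nothing new. A subring of $\F_{p^t}$ that is a field is a subfield $\F_{p^{t'}}$ with $t'\mid t$, and homomorphic images of fields are trivial or isomorphic. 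Hence the subdirectly irreducible fields in the generated variety are exactly the $\F_{p^{t'}}$ with $t'$ dividing some $t\in X_p$. These are exactly the members of $X_p$, since $X_p$ is downward closed.

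If I want to avoid invoking Jónsson's lemma, I can give an elementary argument for the key claim instead. Consider the identity $x^{N}\eqid x$ with $N-1=\operatorname{lcm}\{p^t-1\}$ over the generators. It separates fields of different size but not different characteristics, so I would combine it with identities $p'x\eqid p'y$ for suitable $p'$ to cut out characteristics. Getting these identities to distinguish the pairs $(p,t)$ correctly is the main obstacle. For that reason I would rely on Jónsson's lemma, which makes it routine.

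Finally, the lattice operations correspond. Meets are intersections of varieties; their subdirectly irreducible fields form the intersection of the sets of fields, because a field lies in $\A\cap\A'$ iff it lies in both. Joins are generated by the union of the generating fields, and by the key claim their fields form the union of the downward closed sets. Since $\Phi$ is a bijection between $L(\Rn)$ and $\prod_p\mathcal O(T_p(n))$ that preserves and reflects order, it is a lattice isomorphism, with the coordinate at $p$ as stated.
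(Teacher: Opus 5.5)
Your argument is correct, but it gets surjectivity by a different route from the paper. The paper avoids Jónsson's lemma. For a prescribed family of downward closed sets $D_p$, it writes down the explicit identities $dx\eqid 0$, with $d=\prod_{p\in P}p$, and $\frac{d}{p}\prod_{s\in M_p}(x^{p^s}-x)\eqid 0$ for $p\in P$, where $M_p$ is the set of maximal elements of $D_p$. These hold in the selected fields, and evaluating at a generator of the multiplicative group shows that any $\F_{p^t}\in\Rn$ satisfying them has $t\in D_p$.

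The scalar $d/p$ is exactly what removes the obstacle you met in your elementary attempt. It vanishes in every selected characteristic other than $p$, so each product identity constrains only characteristic $p$. Meanwhile $dx\eqid 0$ rules out the unselected characteristics.

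Your approach replaces this computation with the general inclusion $\SI(\Var(\mathcal K))\subseteq\HS(\mathcal K)$, valid for a finite set $\mathcal K$ of finite algebras generating a congruence distributive variety. It is shorter and needs no choice of identities. The paper's version is self-contained and produces an explicit finite basis for each subvariety relative to $\Rn$, i.e.\ explicit finite choices of $\Sigma_{\A}$ in Proposition~\ref{prop:basis-transfer}.

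Two justifications should be made explicit in your write-up.
\begin{enumerate}
 \item Congruence modularity of rings is not enough for Jónsson's lemma. The reason $\Rn$ is congruence distributive is that in a von Neumann regular ring $I\cap J=IJ$ for ideals: for $a\in I\cap J$ write $a=(ax)a$ with $ax\in I$. Hence $I\cap(J+K)=I(J+K)=IJ+IK=(I\cap J)+(I\cap K)$.
 \item Before concluding that the fields in $\HS(\F_{p^t})$ are exactly the $\F_{p^{t'}}$ with $t'\mid t$, you must handle every subring, not only those assumed to be fields. Every nonzero subring of a finite field is a finite domain and hence a field, so its nontrivial homomorphic images are isomorphic to it.
\end{enumerate}
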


\begin{proof}
By Lemma~\ref{lem:rings}, each subvariety of $\Rn$
is generated by the finite fields which it contains.
For a fixed characteristic, these fields form a
downward closed set, because
$\F_{p^s}\leq\F_{p^t}$ if and only if $s\mid t$
\cite[Chapter~2]{LN}.

To realize prescribed downward closed sets without adding
fields, choose $D_p\subseteq T_p(n)$ and put
$P=\{p:D_p\ne\varnothing\}$. Assume $P\ne\varnothing$. Put $d=\prod_{p\in P}p$, and let
$M_p$ be the set of maximal elements of $D_p$.
The variety generated by the selected fields satisfies
the ring identities
\begin{equation}\label{eq:field-selection}
 dx=0,\qquad
 \frac{d}{p}\prod_{s\in M_p}(x^{p^s}-x)=0
 \quad(p\in P).
\end{equation}
Here ring notation is an abbreviation:
within $\Rn$, replace $-x$ by $(m-1)x$ and zero by $mx$;
the first identity may be written $(d+1)x\eqid x$.

Let $\F_{p^t}\in\Rn$ satisfy \eqref{eq:field-selection}.
The first identity forces $p\in P$.
Choose a generator $a$ of its multiplicative group.
The scalar $d/p$ is nonzero in characteristic $p$,
so one factor in the second identity vanishes at $a$.
For some $s\in M_p$,
$a^{p^s}=a$, whence $p^t-1\mid p^s-1$.
This is equivalent to $t\mid s$, and therefore $t\in D_p$.
No unselected field can belong to the generated variety.
If $P$ is empty, take the trivial variety.
Lemma~\ref{lem:rings} proves bijectivity; inclusion of the
selected fields also preserves the order.
\end{proof}

\begin{corollary}\label{cor:count}
The variety $\Rn\circ\Slp$ has
\[
 78\prod_{p\ {\rm prime}}|\mathcal O(T_p(n))|
\]
subvarieties. For $n=2,3,4$, these numbers are
$156$, $312$, and $234$, respectively.
\end{corollary}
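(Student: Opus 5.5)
The count comes from combining the two lattice decompositions already established with the known size of $L(\Slp)$. By Theorem~\ref{thm:lattice} applied with $\W=\Slp$, there is a lattice isomorphism $L(\Rn\circ\Slp)\cong L(\Rn)\times L(\Slp)$, so
\[
 |L(\Rn\circ\Slp)|=|L(\Rn)|\cdot|L(\Slp)|.
\]
Proposition~\ref{prop:ring-lattice} gives $|L(\Rn)|=\prod_{p}|\mathcal O(T_p(n))|$. This product is finite: $T_p(n)$ is empty unless $p-1\mid n-1$, and every nonempty $T_p(n)$ is finite. For the remaining factor, I would cite the known description of the subvariety lattice of idempotent semirings with commutative addition, due to Ghosh, Pastijn and Zhao, which gives $|L(\Slp)|=78$. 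This is an external result, and it is the only genuinely nontrivial input. Its statement must be checked to be in the same similarity type, with no constants and with commutative addition assumed; that is exactly how $\Slp$ is defined here.

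The numerical values then reduce to computing the posets $T_p(n)$ for $n-1=1,2,3$.

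For $n=2$, the condition $p^t-1\mid 1$ forces $p=2$ and $t=1$. So $T_2(2)=\{1\}$, $|\mathcal O|=2$, and the count is $156$.

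For $n=3$, the condition $p^t-1\mid2$ allows $p^t=2$ or $p^t=3$. So $T_2(3)=T_3(3)=\{1\}$, each contributing a factor $2$, and the count is $78\cdot4=312$.

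For $n=4$, the condition $p^t-1\mid3$ allows $p^t=2$ or $p^t=4$, while $p^t=3$ gives $2\nmid3$. So $T_2(4)=\{1,2\}$, which is a chain under divisibility. Its down-sets are $\varnothing$, $\{1\}$ and $\{1,2\}$, giving a factor $3$, and the count is $234$.

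The main obstacle is not in this paper's arguments: it is justifying the constant $78$. I would first try to import it directly from the literature, checking that the cited lattice counts the trivial variety, since $\mathcal O(T_p(n))$ includes $\varnothing$ and the factorization of Theorem~\ref{thm:lattice} includes trivial coordinates. If citation were unavailable, the fallback would be to reconstruct the lattice from the finitely many small subdirectly irreducible members of $\Slp$. That would be a long case analysis, so I would rely on the cited result.
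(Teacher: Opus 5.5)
Your proposal is correct and follows the paper's own argument. Like the paper, you combine Theorem~\ref{thm:lattice} with $\W=\Slp$, Proposition~\ref{prop:ring-lattice}, and the cited count $|L(\Slp)|=78$ of Ghosh, Pastijn and Zhao, and then read off the factors for $n=2,3,4$. Your explicit posets $T_2(2)=\{1\}$, $T_2(3)=T_3(3)=\{1\}$ and the chain $T_2(4)=\{1,2\}$ correspond exactly to the paper's remarks about $\F_2$, $\F_3$ and $\F_2\leq\F_4$.
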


\begin{proof}
Ghosh, Pastijn and Zhao \cite{GPZ} proved that
$|L(\Slp)|=78$. Apply Theorem~\ref{thm:lattice} and
Proposition~\ref{prop:ring-lattice}. When $n=2$,
only $\F_2$ occurs. The case $n=3$ allows
$\F_2$ and $\F_3$, which have different characteristics.
With $n=4$, the permitted fields are $\F_2$ and $\F_4$;
the former is a subfield of the latter.
The difference between the counts for $n=3$ and $n=4$ shows
that the enumeration depends on the subfield relations among
the permitted fields, even when their number is fixed.
\end{proof}

\begin{lemma}\label{lem:finite-lattice}
A finitely based variety with finitely many subvarieties
is hereditarily finitely based.
\end{lemma}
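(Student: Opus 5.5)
Let $\V$ be a finitely based variety with only finitely many subvarieties, and let $\Sigma$ be a finite basis for $\V$. We must show that every subvariety $\mathbf U\leq\V$ is finitely based. The plan is to argue directly from the finiteness of $L(\V)$: for each subvariety we produce a finite set of identities that cuts out exactly that subvariety inside $\V$, and we adjoin $\Sigma$.

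\emph{Step 1: separating identities.} Fix $\mathbf U\leq\V$. For every subvariety $\mathbf U'\leq\V$ with $\mathbf U'\not\subseteq\mathbf U$, some identity of $\mathbf U$ fails in $\mathbf U'$. Otherwise $\Id(\mathbf U)\subseteq\Id(\mathbf U')$, which forces $\mathbf U'\subseteq\mathbf U$. Choose one such identity $\sigma_{\mathbf U'}\in\Id(\mathbf U)$ for each of the finitely many $\mathbf U'\leq\V$ not contained in $\mathbf U$, and put
\[
 \Gamma=\Sigma\cup\{\sigma_{\mathbf U'}:\mathbf U'\leq\V,\ \mathbf U'\not\subseteq\mathbf U\}.
\]
This is a finite set because $L(\V)$ is finite.

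\emph{Step 2: $\Gamma$ defines $\mathbf U$.} Let $\mathbf M$ be the variety defined by $\Gamma$. Since $\Sigma\subseteq\Gamma$, we have $\mathbf M\leq\V$. Since every identity in $\Gamma$ holds in $\mathbf U$, we also have $\mathbf U\subseteq\mathbf M$. Now suppose $\mathbf M\not\subseteq\mathbf U$. Then $\mathbf M$ is itself one of the indexed varieties $\mathbf U'$, so $\sigma_{\mathbf M}\in\Gamma$ fails in $\mathbf M$. This contradicts the definition of $\mathbf M$. Hence $\mathbf M=\mathbf U$, and $\mathbf U$ is finitely based.

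There is no serious obstacle here. The only point needing care is that the index set in Step 1 ranges over subvarieties of $\V$ itself, so that $\mathbf M$ appears among them. This is guaranteed by including the basis $\Sigma$ of $\V$ in $\Gamma$. We use it later with $\V=\Rn\circ\Slp$: this variety is finitely based by Corollary~\ref{cor:ambient-basis} and has finitely many subvarieties by Corollary~\ref{cor:count}.
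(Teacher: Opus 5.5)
Your proof is correct and matches the paper's argument: for each subvariety not contained in the target you pick an identity of the target that fails in it, adjoin these finitely many identities to the finite basis $\Sigma$ of $\V$, and note that the resulting variety, which lies in $\V$, would be excluded by its own chosen identity if it were not contained in the target. The paper also applies the lemma the way you indicate, to $\Rn\circ\Slp$ via Corollary~\ref{cor:ambient-basis} and Corollary~\ref{cor:count}.
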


\begin{proof}
Let $\V$ have a finite identity basis $\Sigma$, and
let $\Upsilon\leq\V$. For each $\mathbf U\leq\V$ with
$\mathbf U\not\subseteq\Upsilon$, choose an identity
holding in $\Upsilon$ and failing in $\mathbf U$.
There are only finitely many choices. Adjoin them
to $\Sigma$. The resulting variety contains $\Upsilon$
and is contained in $\V$. If it were not contained in
$\Upsilon$, one of the chosen identities would exclude
it, a contradiction.
\end{proof}

\begin{theorem}\label{thm:finite-basis}
For every $n\geq2$ and every $\W\leq\Slp$, the variety
$\Rn\circ\W$ is hereditarily finitely based.
\end{theorem}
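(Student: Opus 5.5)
The natural route is Lemma~\ref{lem:finite-lattice}. Every subvariety of $\Rn\circ\W$ is a subvariety of $\Rn\circ\Slp$, and a subvariety of a hereditarily finitely based variety is hereditarily finitely based. So I will prove the statement for $\W=\Slp$ only, which means checking the two hypotheses of Lemma~\ref{lem:finite-lattice} for $\Rn\circ\Slp$.

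\emph{Finite basis.} Corollary~\ref{cor:ambient-basis} already gives this. Relative to the semiring axioms, $\Rn\circ\Slp$ is defined by the three identities \eqref{eq:ambient-basis}. The semiring axioms (associativity of both operations, commutativity of addition, and the two distributive laws) are finitely many, so together these give a finite identity basis.

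\emph{Finitely many subvarieties.} By Theorem~\ref{thm:lattice}, $L(\Rn\circ\Slp)\cong L(\Rn)\times L(\Slp)$.
\begin{enumerate}
\item Ghosh, Pastijn and Zhao \cite{GPZ} give $|L(\Slp)|=78$.
\item Proposition~\ref{prop:ring-lattice} identifies $L(\Rn)$ with $\prod_p\mathcal O(T_p(n))$. Each $T_p(n)$ is finite, since $p^t-1\le n-1$ bounds $t$. Only primes $p\le n$ give a nonempty $T_p(n)$, so only finitely many factors are nontrivial.
\end{enumerate}
Hence the product is finite; Corollary~\ref{cor:count} records the count explicitly.

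Lemma~\ref{lem:finite-lattice} then shows that $\Rn\circ\Slp$ is hereditarily finitely based. Since $\Rn\circ\W\le\Rn\circ\Slp$, every subvariety of $\Rn\circ\W$ is also a subvariety of $\Rn\circ\Slp$ and therefore has a finite basis.

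No step here is a real obstacle. The only points to check are these:
\begin{itemize}
\item ``finitely based'' in Lemma~\ref{lem:finite-lattice} refers to the full equational basis, including the semiring axioms, and that basis is finite;
\item $\Rn\circ\W$ is itself a subvariety of $\Rn\circ\Slp$, which follows from Lemma~\ref{lem:skeleton} or from \eqref{eq:join}.
\end{itemize}
The substantial work has already been done: the lattice decomposition in Theorem~\ref{thm:lattice} and the external count $78$.
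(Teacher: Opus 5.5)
Your proposal is correct and follows the paper's proof: a finite basis for $\Rn\circ\Slp$ from Corollary~\ref{cor:ambient-basis}, a finite subvariety lattice from Theorem~\ref{thm:lattice}, \cite{GPZ} and Proposition~\ref{prop:ring-lattice} (recorded in Corollary~\ref{cor:count}), and then Lemma~\ref{lem:finite-lattice} applied to $\Rn\circ\Slp$, which covers $\Rn\circ\W$ and all of its subvarieties. Your extra checks, namely that each $T_p(n)$ is finite, that only primes $p\le n$ contribute, and that $\Rn\circ\W\le\Rn\circ\Slp$, just make explicit what the paper leaves implicit.
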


\begin{proof}
Corollary~\ref{cor:ambient-basis} gives a finite basis
for $\Rn\circ\Slp$, and Corollary~\ref{cor:count}
shows that its subvariety lattice is finite.
Lemma~\ref{lem:finite-lattice} applies to this variety
and all its subvarieties.
\end{proof}

The two identity transformations used in
Theorem~\ref{thm:lattice} give bases for individual joins.

\begin{proposition}\label{prop:basis-transfer}
Let $\A\leq\Rn$ be defined relative to $\Rn$ by a set
$\Sigma_{\A}$, and let $\B\leq\Slp$ be defined relative
to $\Slp$ by $\Sigma_{\B}$. Then $\A\vee\B$ is defined
relative to the semiring axioms by
\eqref{eq:ambient-basis}, together with
\[
 \begin{aligned}
 u+mv&\eqid mu+v
       &&(u\eqid v\in\Sigma_{\A}),\\
 mu&\eqid mv
       &&(u\eqid v\in\Sigma_{\B}).
 \end{aligned}
\]
Finite choices of $\Sigma_{\A}$ and $\Sigma_{\B}$
give a finite basis for $\A\vee\B$.
\end{proposition}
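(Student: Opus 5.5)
Let $\mathbf X$ be the variety defined by \eqref{eq:ambient-basis} together with the two displayed families. The plan is to prove the two inclusions $\A\vee\B\subseteq\mathbf X$ and $\mathbf X\subseteq\A\vee\B$ separately. For the second one we place $\mathbf X$ inside $\Rn\circ\Slp$ and then use the description of individual varieties in Theorem~\ref{thm:join}.

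For $\A\vee\B\subseteq\mathbf X$, it suffices to check every defining identity on $\A$ and on $\B$.
\begin{itemize}
\item The identities \eqref{eq:ambient-basis} hold in $\Rn\circ\Slp$ by Corollary~\ref{cor:ambient-basis}, and $\A,\B\subseteq\Rn\circ\Slp$.
\item For $u\eqid v\in\Sigma_{\A}$: the lifted identity $u+mv\eqid mu+v$ holds in $\A$, since $mx$ is the zero there and $u=v$. It holds in $\B$ because in $\Slp$ both sides equal $u+v$ (as $mw=w$).
\item For $u\eqid v\in\Sigma_{\B}$: the identity $mu\eqid mv$ holds in $\A$, where both sides are zero, and in $\B$, where it reads $u\eqid v$.
\end{itemize}

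For the reverse inclusion, let $S\in\mathbf X$. By Corollary~\ref{cor:ambient-basis}, $S\in\Rn\circ\Slp$, so $S$ has a skeleton $I=E^+(S)$ and ring components $R_i$ as in Lemma~\ref{lem:skeleton}. By \eqref{eq:individual-variety}, $\Var(S)$ is the join of the $\Var(R_i)$ and $\Var(I)$. It therefore suffices to show $R_i\in\A$ for each $i$ and $I\in\B$.
\begin{itemize}
\item Each $R_i$ is a subsemiring lying in $\Rn$. It satisfies $u+mv\eqid mu+v$, which in a ring reads $u=v$, since $mv=mu$ is the common zero. Thus $R_i$ satisfies $\Sigma_{\A}$, and since $R_i\in\Rn$, we get $R_i\in\A$.
\item $I$ is a subsemiring lying in $\Slp$, on which $mx=x$. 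Hence $mu\eqid mv$ restricts to $u\eqid v$, so $I$ satisfies $\Sigma_{\B}$ and lies in $\B$.
\end{itemize}
This gives $S\in\A\vee\B$.

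The main obstacle is the reliance on Theorem~\ref{thm:join}. We need to be sure that \eqref{eq:individual-variety} applies to an arbitrary $S\in\Rn\circ\Slp$, without assuming a finite skeleton; it does, since that theorem is stated for all $\W\leq\Slp$, in particular $\W=\Slp$. The other point to watch is that the identities of $\Sigma_{\A}$ and $\Sigma_{\B}$ only need to hold relative to $\Rn$ and $\Slp$ respectively, and the membership of $R_i$ in $\Rn$ and of $I$ in $\Slp$ supplies exactly this.

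Finiteness is immediate: \eqref{eq:ambient-basis} is finite and each family is indexed by $\Sigma_{\A}$ or $\Sigma_{\B}$. This is consistent with Theorem~\ref{thm:lattice}, whose proof already used the lifts \eqref{eq:ring-lift} and \eqref{eq:idempotent-lift}.
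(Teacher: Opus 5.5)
Your proposal is correct and follows essentially the same route as the paper's proof. Both place $S$ in $\Rn\circ\Slp$ via Corollary~\ref{cor:ambient-basis}, observe that the lifted identities restrict to $\Sigma_{\A}$ on each ring component and to $\Sigma_{\B}$ on the skeleton, and conclude with \eqref{eq:individual-variety}; your explicit check of the forward inclusion just spells out what the paper states in one line.
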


\begin{proof}
These identities hold in both $\A$ and $\B$.
Conversely, suppose that $S$ satisfies them.
By Corollary~\ref{cor:ambient-basis},
$S\in\Rn\circ\Slp$. Restricting the first family to a
ring component yields $\Sigma_{\A}$ and places the component
in $\A$. The second family induces $\Sigma_{\B}$ on the
skeleton, which therefore belongs to $\B$.
Equation~\eqref{eq:individual-variety} gives
$S\in\A\vee\B$.
\end{proof}

\section{Special cases and unbounded subdirectly irreducible members}

Under the absorption identity, the classification reduces
to the field and idempotent cases.

\begin{corollary}\label{cor:absorption}
If $\W\leq\mathbf N^+$, then the subdirectly irreducible
members of $\Rn\circ\W$ are precisely the finite fields
$\F_q$ with $q-1\mid n-1$ and the subdirectly
irreducible members of $\W$. Every member of
$\Rn\circ\W$ is a subdirect product of a ring in $\Rn$
and a semiring in $\W$.
\end{corollary}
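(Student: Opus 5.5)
We use Theorem~\ref{thm:classification} for the first assertion and then the subdirect representation theorem together with the support map for the second.

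\emph{SI members.} By Theorem~\ref{thm:classification}, we must show that under $x+xyx\eqid x$ the semirings $A(\F_q,I,e)$ in item~(2) occur exactly when $I$ is trivial, where $A(\F_q,\{e\},e)\cong\F_q$. The key observation is that absorption forces $\down e=\{e\}$. Indeed, if $I\in\mathbf N^+$ has identity $e$ and $i\le e$, then absorption with $x=i$, $y=e$ gives $i+iei=i$, that is $i+i=i$, which is empty. So take $x=e$, $y=i$ instead: $e+eie=e$ gives $e+i=e$, i.e.\ $i\le e$ always holds. Hence every element of $I$ lies in $\down e=D$, so $D=I$ and $\equiv_D$ is the full relation $I\times I$. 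The separation condition in~(2) then forces $|I|=1$. Conversely, when $I=\{e\}$ the condition holds vacuously and $A(\F_q,\{e\},e)$ is just $\F_q$. Item~(1) supplies the SI members of $\W$, which proves the first sentence. (Equivalently, Theorem~\ref{thm:criterion} applies with $D=I$.)

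\emph{Subdirect decomposition.} Let $S\in\Rn\circ\W$. By Birkhoff's theorem, $S$ is a subdirect product of SI members of $\Rn\circ\W$, each of which is either a field $\F_q\in\Rn$ or an SI member of $\W$. Group the projections into those with field targets and those with targets in $\W$. The images of $S$ in the two partial products are subdirect products, hence a ring $R\in\Rn$ and a semiring $J\in\W$, and $S$ embeds subdirectly in $R\times J$. One degenerate case needs attention: if one group is empty, use the trivial algebra, which belongs to both varieties. Alternatively, $J$ may be taken to be the skeleton $I$ via $\eps$, and $R$ to be the image in the product of the field factors.

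The only delicate point is the computation showing $e+i=e$, which must apply absorption in the correct orientation; the rest is formal. We also check that the subdirect embedding into $R\times J$ is surjective onto each factor. This holds because each partial product image is defined as the image of $S$.
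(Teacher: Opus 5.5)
Your proof is correct and follows the paper's argument essentially verbatim. The substitution $x=e$, $y=i$ in $x+xyx\eqid x$ gives $\down e=I$, so $\equiv_D$ is universal and only trivial $I$ survives in Theorem~\ref{thm:classification}, yielding $A(\F_q,\{e\},e)\cong\F_q$; the decomposition then comes from grouping the field factors and the idempotent subdirectly irreducible factors, with the trivial semiring replacing an empty group. One slip to fix: your announced ``key observation'' that absorption forces $\down e=\{e\}$ is wrong and contradicts your own computation, which correctly shows $\down e=I$, and $\down e=I$ is the fact you actually use.
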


\begin{proof}
If $I\in\W$ has multiplicative identity $e$, substitution
of $x=e$ and $y=i$ in $x+xyx\eqid x$ gives
$e+i=e$ for every $i\in I$, giving $\down e=I$.
The relation in \eqref{eq:syntactic} is then the
universal relation, and it is equality only when
$I$ is trivial. Theorem~\ref{thm:classification}
therefore gives the stated list.

Represent an arbitrary member as a subdirect product
of subdirectly irreducible members, and group the
field factors and the idempotent factors separately.
The two projection images belong to $\Rn$ and $\W$,
respectively, and give the required subdirect product.
An empty group of factors is replaced by the trivial
semiring.
\end{proof}

\begin{remark}
For $\W=\mathbf N^+$, Corollary~\ref{cor:absorption}
recovers the decomposition theorem of Wang and Shao
\cite[Theorem~2.1]{WS}. Theorem~\ref{thm:join}
extends their join equality, while
Theorem~\ref{thm:classification} describes the
additional members which can occur when the absorption
identity is removed.
\end{remark}

Let $\mathbf{Sl}$ be the variety in which addition and
multiplication coincide and form a semilattice.
For a finite field $F$, let $F^\infty$ denote the
extension by an element $\infty$ satisfying
$r+\infty=\infty+r=\infty$ and
$r\infty=\infty r=\infty$ for all $r\in F^\infty$.

\begin{corollary}\label{cor:semilattice}
The subdirectly irreducible members of
$\Rn\circ\mathbf{Sl}$ are the two-element semilattice
semiring and the semirings $\F_q$ and $\F_q^\infty$,
where $q-1\mid n-1$.
\end{corollary}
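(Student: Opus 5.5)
Apply Theorem~\ref{thm:classification} with $\W=\mathbf{Sl}$. The work is to identify the subdirectly irreducible members of $\mathbf{Sl}$, and then to determine which $I\in\mathbf{Sl}$ with identity $e$ satisfy the separation condition in item~(2).

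For item~(1), a member of $\mathbf{Sl}$ is just a semilattice $(I,\wedge)$, with both operations equal to the semilattice operation. Congruences of $I$ are exactly the semilattice congruences. Semilattices form a residually small variety whose only subdirectly irreducible member is the two-element semilattice. So item~(1) contributes exactly the two-element semilattice semiring.

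For item~(2), let $I\in\mathbf{Sl}$ have multiplicative identity $e$. Since multiplication equals addition, $ei=e+i=i$ for all $i$, so $i+e=i$, i.e.\ $e\le i$ in the order \eqref{eq:order}. Hence $\down e=\{e\}$ and $e$ is the least element.

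The unary polynomials of $I$ are the maps $x\mapsto x$ and $x\mapsto x+c$ with $c\in I$, including $c=e$, which gives the identity. Since $e$ is least, $x+c=e$ forces $x=c=e$.

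Now take distinct $i,j$. A polynomial separates them relative to $\{e\}$ only if one of them equals $e$: if $p(i)=e$, then $i=e$ for either form of $p$. So the separation condition says that any two distinct elements include $e$. This forces $|I|\le2$. Conversely, $I=\{e\}$ and $I=\{e,\infty\}$ both satisfy it, using the identity polynomial.

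It remains to identify $A(\F_q,I,e)$ in each case.
\begin{itemize}
\item[(a)] If $I$ is trivial, then $A(\F_q,I,e)=\F_q$.
\item[(b)] If $I=\{e,\infty\}$ with $e<\infty$, then $\infty\not\le e$, so by \eqref{eq:mixed-sum} we get $r+\infty=e+\infty=\infty$, and by \eqref{eq:mixed-product} we get $r\infty=\infty r=\infty$. This is exactly $\F_q^\infty$, with $\infty$ absorbing for both operations.
\end{itemize}
Together with the condition $q-1\mid n-1$ inherited from Theorem~\ref{thm:classification}, this gives the stated list.

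The only step needing any care is describing $\Pol_1(I)$ and using the leastness of $e$. It is immediate once we note that in $\mathbf{Sl}$ every term reduces to a join of variables.
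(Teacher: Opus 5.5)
Your proof is correct and follows essentially the same route as the paper. You apply Theorem~\ref{thm:classification}, show that $e$ is least so that $\down e=\{e\}$, observe that the polynomials $x$ and $x+c$ can only separate a pair containing $e$, and conclude $|I|\le 2$. There are two small differences, neither of which affects correctness. You cite the classification of subdirectly irreducible semilattices, whereas the paper sketches it via an embedding into a power of the two-element semilattice. You also omit the constant unary polynomials, which is harmless because a constant can never separate two elements.
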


\begin{proof}
If $I\in\mathbf{Sl}$ has multiplicative identity $e$,
then $e$ is its least element and $\down e=\{e\}$.
A unary polynomial which depends on its variable is
$x$ or $x+c$. These functions make all elements of
$I\setminus\{e\}$ equivalent under $\equiv_{\{e\}}$,
while the identity function separates $e$ from each of them.
For this equivalence relation, the criterion in
Theorem~\ref{thm:criterion} holds exactly when
$|I|\leq2$, yielding $F$ and $F^\infty$.

A semilattice embeds in a power of
the two-element semilattice: if $a\not\leq b$, the map
which is zero on $\down b$ and one elsewhere is a
join homomorphism separating $a$ and $b$.
In a subdirectly irreducible semilattice, a coordinate
which separates a monolith pair must have trivial
kernel, forcing a nontrivial such semilattice to have
exactly two elements.
\end{proof}

A variety is \emph{residually large} if the cardinalities
of its subdirectly irreducible members have no uniform bound.

\begin{theorem}\label{thm:large}
There is a four-element semiring $B$ in
$\mathbf R_2\circ\Slp$ such that $\Var(B)$ is
residually large. The variety $\Rn\circ\Slp$ is
residually large for every $n\geq2$.
\end{theorem}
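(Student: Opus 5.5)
The plan is to take $B=A(\F_2,I,e)$ for a suitable three-element $I\in\Slp$ with multiplicative identity $e$. Then $|B|=2+3-1=4$. By Lemma~\ref{lem:construction}, $B\in\mathbf R_2\circ\Slp$ and $\Var(B)=\Var(\F_2)\vee\Var(I)$. By Theorem~\ref{thm:classification} (with $\W=\Var(I)$), every semiring $A(\F_2,I',e')$ is a subdirectly irreducible member of $\Var(B)$ whenever $I'\in\Var(I)$ has identity $e'$ and satisfies $\equiv_{\down e'}=\Delta_{I'}$. Residual largeness of $\Var(B)$ therefore reduces to exhibiting such $I'$ of arbitrarily large finite size. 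The second assertion is then immediate. Since $1\mid n-1$, we have $\F_2\in\Rn$, and hence $\Var(B)\subseteq\Rn\vee\Slp=\Rn\circ\Slp$ by Theorem~\ref{thm:join}. So $\Rn\circ\Slp$ contains subdirectly irreducible members of unbounded size.

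The key point is that the separation condition of Theorem~\ref{thm:criterion} is much weaker than subdirect irreducibility of $I'$ itself. This matters because $\Slp$ is locally finite with finitely many subvarieties, and its own subdirectly irreducible members are small. I would take $I'$ to be a subsemiring of a power $I^k$ that contains the diagonal identity $\bar e=(e,\dots,e)$. Then $\bar e$ is the identity of $I'$, and $\down\bar e$ is the set of tuples lying coordinatewise in $\down e$. To get $\equiv_{\down\bar e}=\Delta_{I'}$, it suffices to arrange the following. For each coordinate $t$ and each pair of values separated in $I$ by $\down e$, there is a unary polynomial of $I'$ whose output lands in $\down\bar e$ or escapes it according only to the $t$-th coordinate of the input. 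The intended mechanism is to use parameters in $I'$ that equal a "forcing" element in every coordinate except $t$: multiplying or adding such a parameter pushes the other coordinates into $\down e$ (or neutralizes them), and leaves coordinate $t$ free. Distinct tuples differ in some coordinate $t$, and such a polynomial then separates them.

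The main obstacle is choosing $I$. It must have identity $e$, $\down e\ne I$, and enough non-commutative band structure (for instance a pair $a,b$ behaving as a left-zero band, $ab=a$, $ba=b$) to build coordinate-selecting polynomials, while still satisfying both distributive laws. My first naive attempt, a chain $e<a<b$ with $\{a,b\}$ left-zero, already fails distributivity, since $(e+a)b=a\ne b=eb+ab$. So I would begin with a short case search over the three-element members of $\Slp$ with an identity, checking distributivity directly. The main candidates are orderings in which $e$ is not the bottom element, so that $\down e$ is a proper but nontrivial subset. I would then fix the subsemiring $I_k\le I^k$ generated by $\bar e$ and the "all-$a$-except-position-$t$" tuples. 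Next I would verify $|I_k|\to\infty$, for example $|I_k|\ge k$. Finally I would check the polynomial separation, which is a routine coordinatewise computation once $I$ is fixed.
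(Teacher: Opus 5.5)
\textbf{The gap: the choice of $I$ is missing.} Your reduction is sound and matches the paper's. You take $B=A(\F_2,I,e)$. Lemma~\ref{lem:construction} gives $\Var(A(\F_2,I',e'))=\Var(\F_2)\vee\Var(I')\subseteq\Var(B)$ for every $I'\in\Var(I)$ with identity $e'$. Theorem~\ref{thm:criterion} reduces subdirect irreducibility to $\equiv_{\down e'}=\Delta_{I'}$, and $\F_2\in\Rn$ transfers the result to every $n$. However, the theorem is an existence statement, and its entire content is a concrete three-element $I$ together with a verification of the separation condition. You defer both to a future case search. So what you have proved is only the reduction. Your unverified claims about the generated subsemiring $I_k$ (including $|I_k|\ge k$) share the same problem.

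\textbf{The heuristic points the wrong way.} No non-commutative band structure is needed. Two ingredients are needed:
\begin{itemize}
\item an element of $\down e$ that is multiplicatively absorbing, to kill the coordinates you are not looking at;
\item an element that pushes a value of $\down e$ out of $\down e$ under multiplication.
\end{itemize}
Your remark that $e$ must not be the bottom element is correct. If $e$ is least, then $p(x)\geq x$ for every polynomial depending on $x$, and the criterion forces $|I'|\le2$.

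\textbf{The paper's choice.} Take the chain $K=\{0,e,t\}$ with $0<e<t$, addition $\max$, $0$ multiplicatively absorbing, and multiplication equal to $\max$ on $\{e,t\}$.
\begin{itemize}
\item $K$ is commutative and lies in $\Slp$; distributivity holds because multiplication is monotone on a chain.
\item $e$ is its identity and $\down e=\{0,e\}$.
\item You can use the full power $K^J$ rather than a generated subsemiring, with $\down e_J=\{0,e\}^J$.
\end{itemize}
For $a\ne b$ differing at coordinate $j$, let $c$ be $0$ off $j$. Set $c_j=t$ if $\{a_j,b_j\}=\{0,e\}$, and $c_j=e$ otherwise. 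Then $p(x)=xc$ sends exactly one of $a,b$ into $\down e_J$. This gives subdirectly irreducible members $A(\F_2,K^J,e_J)\in\Var(B)$ with $3^{|J|}+1$ elements for finite $J$, where $B=A(\F_2,K,e)$ has four elements. Your ``forcing element'' mechanism is exactly this, with forcing element $0$. Without a concrete $I$ that admits such coordinate-selecting polynomials, the decisive step is absent.
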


\begin{proof}
On the chain $K=\{0,e,t\}$ with $0<e<t$, define addition
by maximum. Multiplication has absorbing element $0$ and
agrees with maximum on $\{e,t\}$.
These operations are associative, and multiplication
is order preserving on a chain, hence distributes
over maximum. This gives $K\in\Slp$, with $e$ as its
multiplicative identity.

For a nonempty set $J$, put
\[
 I_J=K^J,\qquad e_J=(e)_{j\in J},\qquad
 D_J=\down e_J=\{0,e\}^J.
\]
For the separation condition in
Theorem~\ref{thm:classification}, take distinct
$a,b\in I_J$ and choose $j$ with $a_j\ne b_j$.
Let $c$ be zero at every coordinate other than $j$.
If $\{a_j,b_j\}=\{0,e\}$, put $c_j=t$.
Then the $j$th coordinates of $ac$ and $bc$ are
$0$ and $t$, in some order. If one of $a_j,b_j$
is $t$, put $c_j=e$. Again exactly one of the two
products has coordinate $t$ at $j$, and all their
other coordinates are zero. The unary polynomial
$p(x)=xc$ sends precisely one of $a,b$ into $D_J$.

The separation criterion makes
\[
 S_J=A(\F_2,K^J,e_J)
\]
subdirectly irreducible. By
\eqref{eq:construction-variety},
\[
 \Var(S_J)\subseteq\Var(\F_2)\vee\Var(K)
 =\Var\bigl(A(\F_2,K,e)\bigr).
\]
Set $B=A(\F_2,K,e)$. It has four elements.
For $|J|=k<\infty$, the algebra $S_J$ has $3^k+1$
elements; for infinite $J$, it has $3^{|J|}$ elements.
The absence of a uniform bound on these cardinalities makes
$\Var(B)$ residually large.
The membership $\F_2\in\Rn$ for every $n\geq2$ also
gives $B\in\Rn\vee\Slp$ for every such $n$.
\end{proof}

Writing $u$ for the nonzero element of $\F_2$, the
semiring $B$ has operation tables
\[
\begin{array}{c|cccc}
 +&0&e&u&t\\ \hline
 0&0&e&u&t\\
 e&e&e&u&t\\
 u&u&u&e&t\\
 t&t&t&t&t
\end{array}
\qquad
\begin{array}{c|cccc}
 \cdot&0&e&u&t\\ \hline
 0&0&0&0&0\\
 e&0&e&e&t\\
 u&0&e&u&t\\
 t&0&t&t&t
\end{array}.
\]
Since the additive identity of $B$ is $0$, the entries
$e+e=e\ne0$ and $u+u=e\ne u$ show that $B$ is neither a
ring nor additively idempotent.
Its field component has zero $e$, and $u$ is the multiplicative
identity of $B$.

\begin{corollary}
A variety generated by a finite semiring can have both
a finite subvariety lattice and a finite basis for each
subvariety while admitting subdirectly irreducible members
of unbounded cardinality.
\end{corollary}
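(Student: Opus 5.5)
The corollary follows by taking the semiring $B$ of Theorem~\ref{thm:large} as the finite generator and collecting the three properties from earlier results. First, $B=A(\F_2,K,e)$ lies in $\mathbf R_2\circ\Slp$. By Corollary~\ref{cor:ambient-basis} with $n=2$, this ambient variety is defined by finitely many identities. Its subvariety lattice is finite by Corollary~\ref{cor:count}: it has $156$ members, coming from Theorem~\ref{thm:lattice}, Proposition~\ref{prop:ring-lattice} and the count $|L(\Slp)|=78$.

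Second, I pass to $\Var(B)$. Its subvariety lattice is an interval (a down-set) of $L(\mathbf R_2\circ\Slp)$, so it is finite. Every subvariety of $\Var(B)$ is also a subvariety of $\mathbf R_2\circ\Slp$. Theorem~\ref{thm:finite-basis} states that $\mathbf R_2\circ\Slp$ is hereditarily finitely based, so each such subvariety has a finite identity basis. If an explicit basis is wanted, Proposition~\ref{prop:basis-transfer} applied to $\Var(B)=\Var(\F_2)\vee\Var(K)$ supplies one, since $\Var(K)$ is finitely based by Theorem~\ref{thm:finite-basis} applied to $\W=\Slp$.

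Third, Theorem~\ref{thm:large} shows directly that $\Var(B)$ is residually large. The algebras $S_J=A(\F_2,K^J,e_J)$ are subdirectly irreducible, belong to $\Var(B)$, and have unbounded cardinality.

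No step is a real obstacle. The only point needing care is that hereditary finite basedness of the ambient variety passes to subvarieties of $\Var(B)$, and this holds because these are in particular subvarieties of $\mathbf R_2\circ\Slp$.
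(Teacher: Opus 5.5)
Your proposal is correct and follows the paper's proof. The paper also takes $\Var(B)$ from Theorem~\ref{thm:large}, observes that it is a subvariety of $\mathbf R_2\circ\Slp$, and reads off the finite subvariety lattice from Corollary~\ref{cor:count} and the finite bases from Theorem~\ref{thm:finite-basis}; your explicit basis via Proposition~\ref{prop:basis-transfer} is an optional extra that the paper does not use.
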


\begin{proof}
Take $\Var(B)$ from Theorem~\ref{thm:large}.
It is a subvariety of $\mathbf R_2\circ\Slp$, so the
first two additional properties follow from
Corollary~\ref{cor:count} and
Theorem~\ref{thm:finite-basis}.
\end{proof}

\begin{remark}\label{rem:no-two-factor}
The semiring $B$ generating the variety in the preceding
corollary is subdirectly irreducible and is neither a ring nor
additively idempotent.
If it admitted a subdirect embedding into a product of a ring
in $\Rn$ and an idempotent semiring, the coordinate kernels
would have trivial intersection. Subdirect irreducibility forces
one kernel to be trivial. The corresponding projection is
surjective, so $B$ would be isomorphic to that factor, a
contradiction. This counterexample shows that the join equality
of Theorem~\ref{thm:join} does not imply the two-factor
decomposition of Corollary~\ref{cor:absorption} for arbitrary
$\W\leq\Slp$.
\end{remark}


\begin{thebibliography}{99}

\bibitem{AGK}
J. Almeida, H. Goulet-Ouellet and O. Kl\'ima,
What makes a Stone topological algebra profinite,
\emph{Algebra Universalis} \textbf{84} (2023), Article~6.
\href{https://doi.org/10.1007/s00012-023-00804-w}
{doi:10.1007/s00012-023-00804-w}.

\bibitem{AM}
M. F. Atiyah and I. G. Macdonald,
\emph{Introduction to Commutative Algebra},
Addison-Wesley, Reading, Massachusetts, 1969.

\bibitem{BS}
S. Burris and H. P. Sankappanavar,
\emph{A Course in Universal Algebra},
Springer-Verlag, New York--Berlin, 1981.

\bibitem{GPZ}
S. Ghosh, F. Pastijn and X. Z. Zhao,
Varieties generated by ordered bands. I,
\emph{Order} \textbf{22} (2005), 109--128.
\href{https://doi.org/10.1007/s11083-005-9011-z}
{doi:10.1007/s11083-005-9011-z}.

\bibitem{Howie}
J. M. Howie,
\emph{Fundamentals of Semigroup Theory},
Clarendon Press, Oxford University Press, 1995.

\bibitem{Jacobson}
N. Jacobson,
Structure theory for algebraic algebras of bounded degree,
\emph{Ann. of Math. (2)} \textbf{46} (1945), 695--707.

\bibitem{LN}
R. Lidl and H. Niederreiter,
\emph{Finite Fields}, 2nd ed.,
Encyclopedia of Mathematics and its Applications, Vol.~20,
Cambridge University Press, Cambridge, 1997.
\href{https://doi.org/10.1017/CBO9780511525926}
{doi:10.1017/CBO9780511525926}.

\bibitem{Malcev}
A. I. Mal'cev,
Multiplication of classes of algebraic systems,
\emph{Siberian Math. J.} \textbf{8} (1967), 254--267.
\href{https://doi.org/10.1007/BF02302476}
{doi:10.1007/BF02302476}.

\bibitem{PZ}
F. Pastijn and X. Zhao,
Varieties of idempotent semirings with commutative addition,
\emph{Algebra Universalis} \textbf{54} (2005), 301--321.
\href{https://doi.org/10.1007/s00012-005-1947-8}
{doi:10.1007/s00012-005-1947-8}.

\bibitem{WS}
A. Wang and Y. Shao,
On a semiring variety satisfying $x^n\approx x$,
\emph{Publ. Math. Debrecen} \textbf{93} (2018), 73--86.
\href{https://doi.org/10.5486/PMD.2018.8022}
{doi:10.5486/PMD.2018.8022}.

\end{thebibliography}
\end{document}